\newtheorem{thm}{Theorem}[section]
\newtheorem{cor}[thm]{Corollary}
\newtheorem{lem}[thm]{Lemma}
\theoremstyle{definition}
\newtheorem{defn}[thm]{Definition}
\theoremstyle{remark}
\numberwithin{equation}{section}
\newcommand{\R}{\mathbb R}
\newcommand{\eps}{\epsilon}
\newcommand{\p}{\partial}
\newcommand{\comment}[1]{}
\begin{document}

\title{A gradient bound for free boundary graphs}%
\author{D. De Silva}
\address{Department of Mathematics, Barnard College, Columbia University, New York, NY 10027}
\email{\tt  desilva@math.columbia.edu}
\author{D. Jerison}%
\address{Department of Mathematics, Massachusetts Institute of Technology, Cambridge, MA }%
\email{\tt  jerison@math.mit.edu}%

\thanks{Both authors were partially supported by NSF grant DMS-0244991.}
%\subjclass{}%
%\keywords{}%

%\date{}%
%\dedicatory{}%
%\commby{}%
% ----------------------------------------------------------------
\begin{abstract} We prove an analogue for a one-phase free boundary problem
of the classical gradient bound for solutions to the minimal surface
equation.  It follows, in particular, that every energy-minimizing free 
boundary that is a graph is also smooth.  The method we use also leads to
a new proof of the classical mimimal surface gradient bound.
\end{abstract}
\maketitle
% ----------------------------------------------------------------

\section{Introduction}

Let $\Omega$ be a domain in $\R^n$, and consider
the one-phase free boundary problem ($u\ge 0$)
\begin{equation}\label{fbintro} \left \{
\begin{array}{ll}
    \Delta u =0,   & \hbox{in $\Omega^+(u):= \{x \in \Omega : u(x)>0\}$,} \\
    |\nabla u| =1, & \hbox{on $F(u):= \partial \Omega \cap \Omega^+(u)$.} \\
\end{array}\right.
\end{equation}
The set $F(u)$ is known as the free boundary.  There are strong parallels
between the theory of these hypersurfaces and the theory of minimal
surfaces.  The existence of solutions $u$ and partial regularity
(smoothness almost everywhere with respect to surface measure)
of the free boundary $F(u)$ was proved by
Alt and Caffarelli \cite{AC}.
%As pointed out in that paper,
%the free boundary can have a conic point singularity in dimension $n=3$.
%If, on the other hand, the solution is a minimizer of an appropriate
%energy functional (see below), then the free boundary is smooth in dimension
%$3$ but can have singularities in dimensions $n\ge 7$. (See \cite{CJK} and
%\cite{DJ}.)

We will begin by formulating our main result in the special case of 
energy-minimizing solutions.  We call $u$ energy-minimizing on $\bar \Omega$
if $u$ minimizes the functional
\[
J(v)=
\int_{\Omega}(|\nabla v|^2 + \chi_{\{v>0\}})dx,
\]
among all functions with the same boundary values as $u$.  The first variation
(Euler-Lagrange) equations for $u$ are \eqref{fbintro}.  Indeed, it
is easy to show that $u$ is harmonic in $\Omega^+(u)$, and it follows from
deeper results of \cite{AC,C1,C2} that $u$ satisfies the free boundary 
condition $|\nabla u| = 1$ on $F(u)$ in a viscosity sense, defined in Section 2.

Define a cylinder of height $2L$, with base the ball of radius
$B_r$ in $\mathbb{R}^{n-1}$, by
\[
\mathcal{C}(r,L) := B_r \times (-L,L) \subset \mathbb{R}^{n};
\quad
\mbox{(and }\mathcal{C}_L := \mathcal{C}(1,L) ).
\]

\begin{thm}\label{intromain} If an energy-minimizing solution $u$ on the
cylinder $\mathcal{C}_L$ is monotone in the vertical direction,
\[
\partial u/\partial x_n \ge 0 \ \mbox{ on }\  \mathcal{C}_L^+(u),
\]
and its free boundary $F(u)$ is a fixed distance from
the top and bottom of the cylinder, i.~e.,
\[
F(u) \subset \mathcal{C}_{L-\epsilon} \quad \mbox{ for some } \epsilon >0,
\]
then $F(u)$ is the graph of a smooth function $\varphi$,
\[
F(u) = \{(x,y): x\in B_1; \quad y = \varphi(x)\}
\]
with
\[
\sup_{x\in B_{1/2}} |\nabla \varphi(x)| \le C
\]
for a constant $C$ depending only on $L$, $\epsilon$, and $n$.
\end{thm}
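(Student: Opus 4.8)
The plan is to reduce the theorem to a uniform Lipschitz bound for the free boundary, and then to establish that bound by a compactness argument.

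\emph{Reduction.} The monotonicity $\partial u/\partial x_n\ge 0$ gives $\mathcal C_L^+(u)=\{(x,y):y>\varphi(x)\}$ for a lower semicontinuous $\varphi$ with $F(u)=\partial\{u>0\}\cap\mathcal C_L$; the hypothesis $F(u)\subset\mathcal C_{L-\eps}$ forces $|\varphi|\le L-\eps$, and nondegeneracy forces $\varphi$ to be continuous. By \cite{AC,C1,C2}, $u$ is a Lipschitz viscosity solution of \eqref{fbintro}, and a Lipschitz free boundary of the one-phase problem is $C^{1,\alpha}$, hence (bootstrapping the coupled elliptic system for $u$ and $\varphi$) smooth. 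So the theorem reduces to the a priori estimate $\sup_{B_{1/2}}|\nabla\varphi|\le C(L,\eps,n)$, which, by the Alt--Caffarelli partial regularity theory and a routine approximation, we may prove assuming $F(u)$ is already smooth.

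\emph{Compactness.} Suppose the estimate fails: there are minimizers $u_k$ satisfying the hypotheses with $M_k:=|\nabla\varphi_k(x_k)|\to\infty$, $x_k\in\overline{B_{1/2}}$. If $|\nabla\varphi_k|\ge M_k/2$ on a fixed ball $B_{r_0}(x_k)$ along a subsequence, then integrating along the gradient flow of $\varphi_k$ until it leaves $B_{r_0}(x_k)$ gives $\operatorname{osc}_{B_{r_0}(x_k)}\varphi_k\ge M_kr_0/2\to\infty$, contradicting $|\varphi_k|\le L-\eps$. Hence the steepness concentrates: the largest $\rho_k$ for which $|\nabla\varphi_k|\ge M_k/2$ throughout $B_{\rho_k}(x_k)$ satisfies $\rho_k\to 0$. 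Rescale $U_k(z):=\rho_k^{-1}u_k\big((x_k,\varphi_k(x_k))+\rho_kz\big)$; this preserves both equations in \eqref{fbintro} and the monotonicity. Using the uniform Lipschitz and nondegeneracy bounds together with the standard compactness of minimizers ($C^{0,\beta}_{\mathrm{loc}}$ and $H^1_{\mathrm{loc}}$ convergence, and local Hausdorff convergence of free boundaries), extract a global energy-minimizer $U$ on $\R^n$, monotone in $x_n$, of the form $\{U>0\}=\{x_n>\psi(x)\}$, whose free boundary contains a full line parallel to $e_n$: indeed $|\nabla\varphi_k|\ge M_k/2\to\infty$ on $B_{\rho_k}(x_k)$ forces $\psi\to\pm\infty$ as one approaches $0$ along the rescaled gradient flow lines of $\varphi_k$, so $\{0\}\times\R\subset F(U)$.

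\emph{Crux and main obstacle.} It remains to reach a contradiction, and this is the heart of the matter. The naive hope that such a $U$ cannot exist is false: $U=(x_1)^+$ satisfies all the listed properties---it is weakly monotone in $x_n$ and its free boundary $\{x_1=0\}$ contains lines parallel to $e_n$. So the monotonicity must be used quantitatively and in tandem with the trapping $F(u_k)\subset\mathcal C_{L-\eps}$, which the blow-up by itself throws away. I expect the mechanism to parallel the barrier/sliding proof of the minimal surface gradient bound that the authors advertise: compare $u$ with translates $u(x+te_n)\ge u(x)$ ($t\ge0$) and with rescalings $\lambda^{-1}u(\lambda x)$, and use the strong maximum principle---a solution and an ordered strict subsolution with distinct free boundaries cannot touch at an interior point---to show that if $|\nabla\varphi|$ were too large in $B_{1/2}$, a suitably tilted rescaled copy of $u$ could be slid to touch $u$ from below at an interior point, a contradiction, unless the free boundary were forced out of $\mathcal C_{L-\eps}$ near its top or bottom. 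In blow-up terms, one must show that the only global minimizer, monotone in $x_n$, that arises as such a limit is a half-plane solution with \emph{horizontal} free boundary (slope $0$), contradicting $M_k\to\infty$. Making either precise---converting weak $x_n$-monotonicity into a scale-invariant lower bound for $\partial_y u$ on $F(u)$---is exactly the difficulty; its quantitative form should also yield the explicit dependence of $C$ on $L$, $\eps$, $n$.
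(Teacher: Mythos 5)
Your proposal has a genuine gap, and you correctly locate it yourself: the ``Crux and main obstacle'' step is exactly where the argument must do its work, and you stop short of carrying it out. The compactness/blow-up route does \emph{not} lead to a contradiction on its own: as you observe, $U=(x_1)^+$ is a global minimizer, weakly monotone in $x_n$, whose free boundary contains vertical lines, and it \emph{can} arise as a blow-up limit of free boundary graphs whose slopes diverge. Nothing in the hypotheses rules it out as a limit; the trapping $F(u_k)\subset\mathcal C_{L-\eps}$ is destroyed by the rescaling, and the weak monotonicity passes only as $\partial_n U\ge 0$, which is satisfied. So the compactness scheme, as stated, cannot close.

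The paper avoids compactness entirely. Theorem \ref{intromain} is deduced from the a priori estimate Theorem \ref{main}, whose extra hypotheses (continuity of $\varphi$, and an NTA domain $\mathcal D$ sandwiched inside $\mathcal C_L^+(u)$) are supplied for energy minimizers by \cite{D1}. The proof of Theorem \ref{main} is the quantitative sliding argument you correctly guess should be the mechanism, but the engine that makes it quantitative is the boundary Harnack inequality on NTA domains (Theorem \ref{BHP} / Corollary \ref{corharnack}). Concretely: writing $v(x)=u(x-\eps e_n)$, nondegeneracy plus interior Harnack give the separation $u-v\ge c_1\eps$ on the top cap of a slightly smaller cylinder. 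One then builds a harmonic $w$ with $v<w<u$, vanishing on $F(u)$, and the boundary Harnack inequality upgrades the top-cap gap into $c_2\eps\, u\le u-w\le C_2\eps\, u$ \emph{all the way to the free boundary}. This gives $|\nabla w|\le 1-c_2\eps$ $\omega$-a.e.\ on $F(u)$ (via Lemma \ref{full harmonic measure}, which uses the NTA Fatou theorem), and hence $|\nabla w|\le 1-c_2\eps+C_3 u$ in the interior. Subtracting a small multiple of an explicit subharmonic bump $g$ produces the strict supersolutions $w_t=w-tg$ with $|\nabla w_t|<1$ on $F(w_t)$ for $0<t\le c\eps$. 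A continuity/first-touching argument (using the NTA corkscrew condition and nondegeneracy to handle the degenerate case $\nabla w_{t_0}(x_0)=0$) shows $v\le w_{c\eps}$, which yields the Lipschitz bound with constants depending only on $L$, $\eps$, $n$, and the NTA constants. This boundary Harnack step is precisely the ``scale-invariant lower bound for $\partial_n u$ near $F(u)$'' you identify as missing; without it, or an equivalent quantitative input, the argument cannot be completed.

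Two smaller remarks. First, your reduction assumes one may argue as if $F(u)$ were already smooth; the paper instead works directly with viscosity solutions and shows $F(u)$ is smooth only $\omega$-a.e.\ (Lemma \ref{full harmonic measure}), which is all the touching argument needs. Second, even if a compactness argument could be rescued by some classification of monotone global minimizers, it would not by itself yield the explicit dependence $C=C(L,\eps,n)$ asserted in the statement, whereas the sliding argument does.
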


Let us compare our theorem  with the classical gradient bound
on minimal surfaces due to Bombieri, De Giorgi, and Miranda, 
which can be stated as follows.
\begin{thm}\label{boundMS} \cite{BDM}  Let $\phi \in C^\infty(B_1)$ be a
solution to the minimal surface equation \begin{equation}\label{MS}
\textrm{div}\left(\frac{\nabla \phi}{\sqrt{1+|\nabla \phi|^2}}\right)= 0 \quad
\text{in} \quad B_1, \end{equation}with $|\phi|\leq M.$ Then
\begin{equation}
|\nabla \phi| \leq C \quad \text{in} \quad {B}_{1/2}\end{equation}
with $C$ depending on $n$ and $M$. \end{thm}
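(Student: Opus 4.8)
The plan is to adapt the compactness argument behind Theorem~\ref{intromain}, replacing ``energy-minimizing'' by ``area-minimizing graph'' and the trapping hypothesis $F(u)\subset\mathcal C_{L-\eps}$ by the height bound $|\phi|\le M$. I would argue by contradiction: suppose there are smooth solutions $\phi_k$ of \eqref{MS} on $B_1$ with $|\phi_k|\le M$ and points $x_k\in\overline{B_{1/2}}$ at which $|\nabla\phi_k(x_k)|\to\infty$; passing to a subsequence, $x_k\to x_0\in\overline{B_{1/2}}$ and $\phi_k(x_k)\to y_0\in[-M,M]$. The graphs $\Sigma_k:=\{(x',\phi_k(x')):x'\in B_{3/4}\}$ are area-minimizing hypersurfaces in $B_{3/4}\times\R$ --- the graph of a solution of \eqref{MS} is calibrated, hence mass-minimizing among all competitors with the same boundary --- they lie in the slab $B_{3/4}\times[-M,M]$, and comparison with the ``curtain'' that hangs from $\partial\Sigma_k$ straight down to the disc $B_{3/4}\times\{-M\}$ bounds their areas by a constant depending only on $n$ and $M$, uniformly in $k$.

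By Federer--Fleming compactness and lower semicontinuity of mass (together with the accompanying varifold and Hausdorff-distance convergence from the De~Giorgi--Allard regularity theory), a subsequence of the $\Sigma_k$ converges to a mass-minimizing current $T$ in $B_{3/4}\times\R$, supported in $B_{3/4}\times[-M,M]$ and with $\partial T$ lying over $\partial B_{3/4}$. The upward unit normal to $\Sigma_k$ at $(x_k,\phi_k(x_k))$ is $(-\nabla\phi_k(x_k),1)/\sqrt{1+|\nabla\phi_k(x_k)|^2}$, which becomes horizontal as $k\to\infty$; hence $\mathrm{spt}\,T$ contains a vertical segment through $(x_0,y_0)$ and $T$ fails to be graphical there. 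By the structure theory for limits of minimal graphs (Bombieri--De~Giorgi--Miranda, Miranda; see also the book of Giusti), $\mathrm{spt}\,T$ is the union of the closure of the graph of a bounded minimal function $\phi_\infty$ over $B_{3/4}\setminus K$ and a vertical part contained in $K\times\R$, where $K\subset B_{3/4}$ is a nonempty minimal hypersurface of $\R^{n-1}$ passing through $x_0$.

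The remaining --- and decisive --- step is to rule out this vertical wall, and I expect it to be the main obstacle rather than the compactness packaging. This is precisely where the uniform graphicality of the $\Sigma_k$ over the \emph{full} ball $B_1$, as opposed to over a shrinking neighborhood, must be exploited: a bounded minimal graph cannot bend back over the vertical direction, and this rigidity has to be pushed to the limit to contradict the presence of $K\times\R$. In the classical proof this is carried out analytically, via the subharmonicity on the graph of $\log\sqrt{1+|\nabla\phi|^2}$ (equivalently, $\langle\nu,e_n\rangle$ is a positive Jacobi field on the graph), combined with the Michael--Simon Sobolev inequality and a Moser iteration using the monotone, boundedly large area ratios of a minimizing surface. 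Following the spirit of Theorem~\ref{intromain}, I would instead replace this by a one-sided comparison: the vertical translates $T+se_n$ are again mass-minimizing, they are ordered in $s$ over $K$ but not over $B_{3/4}\setminus K$, and sliding $s$ to first contact and invoking the strong maximum principle for minimal hypersurfaces should force $\mathrm{graph}(\phi_\infty)$ to continue across $K$, against the definition of $K$ --- the analogue of the Liouville/rigidity step that is the heart of Theorem~\ref{intromain}. Once any such contradiction is secured, one concludes $|\nabla\phi(x_0)|\le C(n,M)$ for every $x_0\in\overline{B_{1/2}}$ and every solution, which is the assertion.
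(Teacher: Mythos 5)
Your compactness argument is a genuinely different route from the paper's Section~3 proof, which never takes a limit of solutions. The paper establishes the estimate directly, in three steps that mirror the free boundary proof: (1) integrating the minimal surface equation against $\eta^2\phi$ gives $\int_{B_{1/2}}|\nabla\phi|\le C(n,M)$, hence a smoothly bounded set $\tilde E\subset B_{1/2}$ of measure $\ge|B_{1/2}|/2$ on which $|\nabla\phi|$ is uniformly bounded and the graph $S$ separates from its $\eps$-translate $S_\eps$ by at least $c_0\eps$; (2) a supersolution $w_1$ of $\Delta_S$ on $S\setminus E$ with $w_1=1$ on $E$ is lower-bounded on all of $\mathcal S_{1/2}$ by the Bombieri--Giusti intrinsic Harnack inequality (Theorem~\ref{weakHarnack}, Corollary~\ref{global Harnack}), and then perturbed to $w=w_1+\delta_1w_0$ with $\Delta_Sw+|A|_S^2w>0$ off $E$ and $w\ge c_0$ on $B_{1/2}$; (3) the normal deformations $S_{t,\nu}=S+tw\nu$, being strict subsolutions to the mean curvature operator off $E_{t,\nu}$, slide from $t=0$ to $t=c_0\eps$ without touching $S_\eps$, forcing separation $\ge c_1\eps$ on $B_{1/2}$, i.e.\ the gradient bound. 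The Bombieri--Giusti Harnack inequality is the engine, chosen precisely as the minimal-surface analogue of the NTA boundary Harnack inequality of Section~2; that parallel is the whole point of the paper. A compactness argument that never invokes it would still be a proof, but with entirely different content.

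The real gap, which you flag yourself, is the exclusion of the vertical wall $K\times[a,b]$ in the limit $T$, and your sliding resolution does not close it as stated. For small $s>0$, $T$ and $T+se_n$ are not in one-sided tangential contact: their wall pieces overlap on an open subset of the hypersurface while their graph pieces are strictly separated, so there is no ``first touching'' point, the one-sided strong maximum principle for touching minimal varifolds does not engage, and the boundaries over $\partial B_{3/4}$ are shifted. What does close the gap --- and is close to what you mention only to set aside --- is the Jacobi field argument applied directly to the limit: away from a set of Hausdorff dimension $\le n-8$, the reduced boundary $\partial^*U$ of the limit subgraph is an analytic minimal hypersurface on which $v=\langle\nu_U,e_n\rangle\ge0$ solves $\Delta_Sv+|A|_S^2v=0$; the wall has $v\equiv0$ yet, since corners are excluded by interior regularity theory, joins smoothly to graph pieces with $v>0$ in the same connected component, contradicting the strong maximum principle. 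Equivalently, coincidence of $T$ and $T+se_n$ on the open wall overlap forces, by analytic unique continuation, coincidence on the whole component, contradicting $\phi_\infty\neq\phi_\infty+s$. For $n\ge9$ you must additionally check that the codimension-$8$ singular set neither disconnects the component nor obstructs unique continuation. So the compactness skeleton is viable, but the decisive step needs this maximum-principle/unique-continuation argument spelled out rather than the first-contact sliding heuristic; note also that in this framing the Moser iteration of BDM you cite is not needed --- the qualitative Jacobi field dichotomy suffices once the GMT packaging is granted.
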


\noindent
The hypothesis $\partial u/\partial x_n \ge 0$ in Theorem \ref{intromain}
implies, by the strong maximum
priniciple, that $\partial u/\partial x_n > 0$.  Therefore the level surfaces
$\{x: u(x) =c\}$ for $c>0$ are graphs.  The hypothesis that the free boundary
is a fixed distance from the top and bottom of the cylinder replaces the hypothesis
in Theorem \ref{boundMS} that the oscillation of the function $\phi$ is bounded by 
$M$.   Furthermore, the minimal surface equation \eqref{MS} implies
that the graph of $\phi$ is area-minimizing, so that the assumption in 
Theorem \ref{intromain} that the free boundary is energy minimizing is
analogous.

In the theory of
minimal surfaces, it is well-known that minimal graphs are real analytic
in the interior of the their domain of definition.
The key first step in the proof of full regularity of the minimal
graphs is to establish that the graph is Lipschitz, that is, the graph
of a function with a bounded gradient.   The gradient bound proved here leads, likewise, 
to full regularity.  If the free boundary is a Lipschitz graph, then
Caffarelli \cite{C1} proved that the graph is $C^{1,\alpha}$ for some
$\alpha>0$. Higher regularity results of \cite{KN} then yield the local
analyticity of $F(u)$.  So real analyticity follows if one can confirm
the Lipschitz property, i.~e., the gradient bound.

%Our regularity assumption, known as NTA (nontangentially
%accessible), is sufficiently mild to be useful in applications because it
%is valid for energy-minimizing solutions.  

In \cite{D2}, an a priori gradient bound for
smooth free boundary graphs is proved in the case when
$n=2,3.$  The proof given there is also motivated by the strong analogy
with minimal surfaces, but is completely different.  An advantage of the
results here is that because they work in all dimensions, they can
be expected to apply to the free boundary analogue of the Bernstein problem.
The application
we have in mind is to the construction (as yet unrealized) of a global solution
to the free boundary problem (other than the obvious solution $u(x) = x_1^+$)
whose level surfaces are graphs.  This would
be analogous to the counterexample to the Bernstein conjecture ---
a complete non-planar mimimal graph constructed in \cite{BDG} in $\mathbb{R}^9$.
In \cite{DJ}, it is shown that a certain cone in $\R^7$ is the free boundary
analogue of the Simons cone in minimal surface theory.  Based on this example,
one should expect to find a free boundary whose level surfaces are
non-flat graphs in $\mathbb{R}^8$.

The theorem whose proof occupies most of this paper has a more technical statement.
See Section 2 for the definition
of a viscosity solution and nontangentially accessible (NTA) domains.
\begin{thm}\label{main}  Let $u$ be a viscosity solution to \eqref{fbintro}
in the cylinder $\mathcal{C}_L$.  Suppose that $u$  is monotone in the
vertical direction,
\[
\partial u/\partial x_n \ge 0 \ \mbox{ on }\  \mathcal{C}_L^+(u),
\]
and its free boundary is given as the graph of a continuous function
$\varphi$, $F(u)=\{(x,y): x\in B_1; \quad y = \varphi(x)\}$.  Suppose
that the oscillation of $\varphi$ is bounded,
\[
\max_{x\in B_1}|\varphi(x)| \le L-1,
\]
and, finally, that there is a nontangentially accessible (NTA)
domain $\mathcal D$ such that $$
\mathcal{C}\left(\frac{9}{10},L-\frac{1}{2}\right) \cap \mathcal{C}^+_L(u) \subset \mathcal D \subset \mathcal{C}^+_L(u).$$
Then
\[
\sup_{x\in B_{1/2}} |\nabla \varphi(x)| \le C
\]
for a constant $C$ depending only on $L$, the NTA constants, and $n$.
\end{thm}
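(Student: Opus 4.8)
The plan is to transport the Bombieri--De Giorgi--Miranda scheme of Theorem~\ref{boundMS} to the free boundary and to prove the Lipschitz bound $|\nabla\varphi|\le C$ on $B_{1/2}$; the smoothness of $\varphi$ and the precise form of the conclusion then follow from the regularity theory quoted in the introduction (Lipschitz $\Rightarrow C^{1,\alpha}$ by \cite{C1}, then analytic by \cite{KN}). First a reduction. Since $\partial u/\partial x_n\ge 0$, the strong maximum principle gives $\partial u/\partial x_n>0$, so $\nabla u\neq0$ throughout $\mathcal C_L^+(u)$ and every level set $\Sigma_c=\{u=c\}$, $c>0$, is a smooth graph $y=\varphi_c(x)$ with $\nabla\varphi_c=-\nabla'u/(\partial u/\partial x_n)$, hence
\[
W_c:=\sqrt{1+|\nabla\varphi_c|^2}=\frac{|\nabla u|}{\partial u/\partial x_n}\quad\text{on }\Sigma_c,
\]
and $\varphi_c\downarrow\varphi$ as $c\downarrow0$. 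Thus it suffices to bound $W_c$ uniformly in $c$ on $\Sigma_c\cap\mathcal C(3/4,L-3/4)$. The role played in the minimal surface case by the normal component $1/\sqrt{1+|\nabla\phi|^2}$ of the graph --- which is a Jacobi field, $\Delta_M(\cdot)+|A|^2(\cdot)=0$ --- is played here by the positive harmonic function $\partial u/\partial x_n$. The first and most delicate step is to find the analogue of the classical subharmonicity $\Delta_M\log W\ge|A|^2\ge0$: combining the harmonicity of $\partial u/\partial x_i$ with the free boundary relation $|\nabla u|=1$ on $F(u)$, I expect a differential inequality of the form $\mathcal L_c\log W_c\ge 0$ on $\Sigma_c$, where $\mathcal L_c$ is the Laplace--Beltrami operator of the level set (plus controlled lower-order terms reflecting the non-vanishing mean curvature of the foliation and the Neumann-type behavior of $\partial u/\partial x_n$ induced by the free boundary condition).

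\emph{De Giorgi iteration.} Granted such an inequality, I would run a De Giorgi iteration for $\log W_c$, testing against $(\log W_c-k)_+$ times a cutoff and integrating over $\Sigma_c$. Two ingredients close the iteration with constants independent of $c$ and of $u$. First, a Sobolev (isoperimetric) inequality on the hypersurfaces $\Sigma_c$: this is where the NTA hypothesis enters, since the $\Sigma_c$ foliate $\mathcal C_L^+(u)\supset\mathcal D$ and the NTA structure of $\mathcal D$ (corkscrews, Harnack chains, doubling of harmonic measure, boundary Harnack) gives uniform geometric control of the level sets up to $F(u)$, yielding the Michael--Simon-type inequality needed. Second, an a priori perimeter bound $\mathcal H^{n-1}\bigl(\Sigma_c\cap\mathcal C(9/10,L-1/2)\bigr)\le C(n,L)$: by the oscillation hypothesis $F(u)$ is trapped in the bounded set $B_1\times[-(L-1),L-1]$, and the standard nondegeneracy and density estimates for \eqref{fbintro} give Ahlfors upper regularity $\mathcal H^{n-1}(F(u)\cap B_r(p))\le C(n)r^{n-1}$; covering and letting $c\downarrow0$ gives the bound on the nearby $\Sigma_c$. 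The iteration then produces $\sup_{\Sigma_c\cap\mathcal C(3/4,L-3/4)}W_c\le C(n,L,\text{NTA})$, and letting $c\downarrow0$ yields a uniform Lipschitz constant for $\varphi$ on $B_{3/4}\supset B_{1/2}$.

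\emph{Main obstacle and the minimal surface analogue.} The hard part is the first step: identifying and rigorously justifying the correct Jacobi-type inequality for $\log W_c$ in the free boundary setting, and controlling the passage $c\downarrow0$ where $F(u)$ is a priori only a continuous graph and $u$ only a viscosity solution; I would expect to be forced to interpret the inequality in a weak or viscosity sense on $F(u)$ rather than pointwise on smooth level sets, and to use the viscosity formulation of $|\nabla u|=1$ together with the Alt--Caffarelli regularity-almost-everywhere to make the boundary terms meaningful. The same architecture specializes to a proof of Theorem~\ref{boundMS}: there the differential inequality is exactly $\Delta_M\log W\ge|A|^2\ge0$, the Sobolev inequality is Michael--Simon, and the area bound $\mathcal H^{n}(\mathrm{graph}(\phi)\cap B_1)\le C(n,M)$ follows from $|\phi|\le M$ and minimality --- precisely the three ingredients above, with energy minimality replaced by the NTA hypothesis (providing the Sobolev inequality) and Alt--Caffarelli regularity (providing the perimeter bound).
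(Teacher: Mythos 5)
Your proposal takes a genuinely different route from the paper, and it has a gap at its core that I do not think can be closed. You want to run a Bombieri--De~Giorgi--Miranda iteration on the level sets $\Sigma_c=\{u=c\}$ by establishing a Jacobi-type inequality $\mathcal L_c\log W_c\ge 0$ with $W_c=|\nabla u|/\partial_n u$. But the identity $\Delta_M\log W\ge|A|^2$ on a minimal graph is a consequence of the minimal surface equation itself; the level sets of a harmonic function are not minimal hypersurfaces, their mean curvature is $-\partial_\nu\log|\nabla u|$, and there is no analogue of the Jacobi-field structure for $\partial_n u$ restricted to $\Sigma_c$. Nothing in the free boundary condition $|\nabla u|=1$ (which lives only on $F(u)$) supplies a sign on $\Delta_{\Sigma_c}\log W_c$ in the interior, and you offer no computation --- you yourself flag this as the ``hard part.'' The second missing ingredient is equally serious: the NTA hypothesis is a statement about corkscrews, Harnack chains and the boundary Harnack principle for harmonic functions; it does not produce a Michael--Simon-type Sobolev inequality on the hypersurfaces $\Sigma_c$, which would require quantitative curvature/area control that NTA simply does not encode. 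So the two pillars of your De~Giorgi iteration are both unproven, and the first is likely false.

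The paper avoids level-set PDEs entirely. It works in the solid domain with $u$ and its vertical translate $v(x)=u(x-\eps e_n)$: Step~1 proves nondegeneracy and a separation $u-v\ge c_1\eps$ away from $F(u)$; Step~2 builds an intermediate harmonic function $w$ with $v<w<u$, and uses the NTA \emph{boundary Harnack inequality} (this, not a Sobolev inequality, is precisely what the NTA hypothesis is for) to upgrade the interior gap to $c_2\eps u\le u-w\le C_2\eps u$ all the way to $F(u)$, hence $|\nabla w|\le 1-c_2\eps+C_3u$; then a subharmonic bump $g$ gives a one-parameter family of strict supersolutions $w_t=w-tg$. Step~3 is a sliding/continuity argument showing $v\le w_t$ for $0\le t\le c_4\eps$, which translates into the Lipschitz bound on $F(u)$. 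Note the paper's own proof of the minimal surface bound (Section~3) is also a comparison/sliding argument in the spirit of Korevaar, built on the Bombieri--Giusti weak Harnack inequality to pass from ``separation on a set of substantial measure'' to ``separation everywhere'' --- not the BDM De~Giorgi iteration you are trying to transplant. If you want to salvage your plan, you would at minimum need to prove the Jacobi inequality on $\Sigma_c$ and a surface Sobolev inequality uniform in $c$; I do not see how to get either from the stated hypotheses.
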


%I would replace the sentence ``$F(u)\cap \mathcal{C}(9/10,L)$ is nontangentially accessible
%from the positive phase $\mathcal{C}^+_L(u)$ side" with
%``$\mathcal{C}(9/10,L)^+(u)$ is nontangentially accessible" since
%it fits better with our definition of NTA domain and the notation
%in Lemma 2.7.-- I changed 3/4 to 9/10 because in the proof I will
%need to restrict many times to smaller cylinders and I gave myself
%a little space.}

Theorem \ref{intromain} will follow from Theorem \ref{main} using results
of  \cite{D1}.  Roughly 
speaking, \cite{D1} shows that the hypotheses of Theorem \ref{intromain}
imply the hypotheses of Theorem \ref{main}.  In particular, a key
estimate from \cite{D1} is that the positive phase satisfies an NTA
property on any smaller cylinder.  Moreover, it is also proved in \cite{D1} that 
under the hypotheses of Theorem \ref{intromain},
the free boundary is the graph of a continuous function $\varphi$.

The proof of Theorem \ref{main} is based on comparing $u(x)$ to
its vertical translates $u(x + te_n)$.  One constructs a family of
supersolutions related to $u(x + te_n)$ and uses a deformation maximum
principle argument to show that $u(x+te_n) \ge u(x) + ct$ for
sufficiently small $t>0$.  The function $u(x)$ is comparable to the
distance from $x$ to the free boundary.  The estimate shows that the
change in $u$ in the vertical direction is comparable to the change in
$u$ in the direction normal to each level surface, which is equivalent
to a Lipschitz bound on the graph of the level surface.

The construction of the family of supersolutions makes use of the
basic estimates on NTA domains which were the reason the notion of NTA
was introduced in \cite{JK}.  The NTA property
guarantees that every positive harmonic function that vanishes on the
boundary vanishes at the same rate as $u$.  The NTA property was first
used in connection with regularity of free boundaries by Aguilera,
Caffarelli and Spruck \cite{ACS}, who
proved a partial regularity result.
The NTA property also holds for the singular conic solution of Alt and
Caffarelli.  (This cone is not a graph, of course.  Otherwise it would
contradict Theorem \ref{main}.)

Our proof of the gradient bound for free boundaries leads to a new
proof of the classical gradient bound for minimal graphs.
This new proof of Theorem \ref{boundMS} is related to a much simpler
proof due to N. Korevaar \cite{Ko}.
The hope is that this new method, while more complicated than the method
in \cite{Ko},  will ultimately apply to
classes of semilinear problems that include both free boundary problems
and minimal surface problems as singular limits.   An interesting
aspect of our proof is that it deepens the analogy
between minimal surfaces and free boundaries. 

The paper is organized as follows. In Section 2 after briefly
recalling some standard definitions and known results, we prove
Theorem \ref{main} and deduce Theorem \ref{intromain}.
We present our proof of Theorem \ref{boundMS} in Section 3. In Section 4,
we examine the parallels between the two proofs and especially
between two key parallel ingredients, namely 
the boundary Harnack inequality for NTA domains and 
the intrinsic Harnack inequality of Bombieri
and Giusti \cite{BG}.

\section{Gradient bound for free boundary graphs}

\subsection{Preliminaries.}
We recall the definition of a viscosity solution \cite{C1}.
\begin{defn}\label{defsol} Let $u$ be a nonnegative continuous function in
$\Omega$. We say that $u$ is a viscosity solution to \eqref{fbintro} in
$\Omega$ if and only if the following conditions are
satisfied:
\begin{enumerate}
\item $\Delta u = 0$ in $\Omega^+(u)$;
\item If $x_0 \in F(u)$ and $F(u)$ has at $x_0$ a
tangent ball $\mathcal{B}_{\epsilon}$ from either the positive or the zero
side, then, for $\nu $ the unit radial direction of $\partial
\mathcal{B}_{\epsilon}$ at $x_0$ into $\Omega^+(u)$,
\begin{equation}\nonumber
u(x)= \langle x-x_0,\nu\rangle^+ + o(|x-x_0|), \ \text{as $x\rightarrow x_0.$}
\end{equation}
\end{enumerate}
\end{defn}
Standard elliptic
regularity theory implies that if $F(u)$ is a smooth surface
near $x_0$, then $u$ is smooth up to
the free boundary near $x_0$ and the free boundary condition $|\nabla u|=1$ is
valid in the classical sense in such a neighborhood.

Denote by $d(x)=\text{dist}(x,F(u))$.  In this section, the balls
$\mathcal{B}_r = \mathcal{B}_r(0)$ and $\mathcal{B}_r(x)$ will be
in $\R^n$ while the balls $B_r(x)$ will be in $\R^{n-1}$.  The
following result follows easily from the Hopf lemma and interior
regularity of elliptic equations (see for example
\cite{CS},\cite{D2}).
\begin{lem}\label{Lipconst}Let $u$ be a viscosity solution to
\eqref{fbintro} in $\mathcal{B}_1$, $0 \in F(u)$. Then,
$u$ is Lipschitz continuous in $\mathcal{B}_{1/2}$ and there is a dimensional
constant $K$ such that
\[
\sup_{\mathcal{B}_{1/2}}|\nabla u| \le K,
\]
and
\[
u(x) \leq K d(x), \ \ \ \text{for all $x \in \mathcal{B}_{1/2}$}.
\]
\end{lem}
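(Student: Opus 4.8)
The plan is to prove Lemma~\ref{Lipconst}, which asserts both an interior gradient bound for $u$ on $\mathcal{B}_{1/2}$ and the linear growth estimate $u(x) \le K\, d(x)$ away from the free boundary. The starting observation is that since $u$ is harmonic in the positivity set $\mathcal{B}_1^+(u)$ and $0 \in F(u)$, one can control $u$ through the free boundary condition encoded in Definition~\ref{defsol}(ii). First I would establish that $u$ is bounded on, say, $\mathcal{B}_{3/4}$: this follows by a standard barrier/comparison argument using that $u$ vanishes on $F(u)$ and the viscosity asymptotics $u(x) = \langle x - x_0, \nu\rangle^+ + o(|x-x_0|)$ at touching points, which prevents $u$ from growing faster than linearly as one moves away from the free boundary. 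In fact the cleanest route is to prove the growth bound $u(x) \le K\, d(x)$ first and deduce everything else from it.

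To prove $u(x) \le K\, d(x)$ for $x \in \mathcal{B}_{1/2}$, I would argue by contradiction or by a direct ball-comparison. Fix $x \in \mathcal{B}_{1/2} \cap \Omega^+(u)$ and let $d = d(x)$; the ball $\mathcal{B}_d(x)$ lies in the positivity set, and there is a point $x_0 \in F(u)$ with $|x - x_0| = d$. The key step is a Harnack-chain / boundary comparison: in $\mathcal{B}_d(x)$, $u$ is a positive harmonic function, so by the interior Harnack inequality $u$ is comparable to $u(x)$ on $\mathcal{B}_{d/2}(x)$; on the other hand, because $F(u)$ has (at most after a minor adjustment) a tangent ball configuration near $x_0$ — or more robustly, because of the nondegeneracy implicit in the viscosity free boundary condition — one compares $u$ near $x_0$ to the linear profile $\langle \cdot - x_0, \nu\rangle^+$ of slope $1$. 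Combining these, $u(x) \le C d$ with $C$ dimensional. The case $d \ge 1/4$, i.e. $x$ far from $F(u)$, is handled separately by a global bound on $u$ on $\mathcal{B}_{3/4}$ obtained from a standard maximum-principle barrier pinned at the free boundary.

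Once $u(x) \le K d(x)$ is in hand, the gradient bound $\sup_{\mathcal{B}_{1/2}} |\nabla u| \le K$ follows by a routine scaling argument. For $x \in \mathcal{B}_{1/2} \cap \Omega^+(u)$ with $d = d(x)$, the rescaled function $v(y) := u(x + d y)/d$ is harmonic and bounded by a dimensional constant on $\mathcal{B}_{1/2}$ (using the growth bound again), so interior gradient estimates for harmonic functions give $|\nabla u(x)| = |\nabla v(0)| \le C$; points on $F(u)$ itself are handled by continuity of $\nabla u$ up to the regular part and a limiting argument, or simply by noting $|\nabla u|$ is bounded on the dense open set $\Omega^+(u)$ and $u$ is Lipschitz there with uniform constant, hence Lipschitz on $\mathcal{B}_{1/2}$. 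The references \cite{CS} and \cite{D2} carry out exactly this Hopf-lemma-plus-interior-regularity scheme, so I would cite those for the routine parts.

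The main obstacle — the one place where the viscosity (rather than classical) nature of the solution must be used carefully — is extracting the linear \emph{upper} bound on $u$ near a free boundary point $x_0$ where $F(u)$ may be quite irregular. The viscosity condition in Definition~\ref{defsol}(ii) only gives information at points possessing a tangent ball from one side, and an arbitrary point of $F(u)$ need not have one. The standard fix, which I would invoke, is to slide a ball toward $F(u)$ from inside $\Omega^+(u)$ until it first touches: the touching point does have a tangent ball from the positive side, the asymptotic expansion applies there with slope $1$, and this, combined with the Harnack inequality to transfer the estimate back along a Harnack chain to the original point $x$, yields the bound $u(x) \le K\, d(x)$ with a purely dimensional constant $K$. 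This sliding-ball device, together with interior elliptic regularity, is the heart of the argument and everything else is bookkeeping.
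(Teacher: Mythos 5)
Your proof is correct and is precisely the Hopf-lemma-plus-interior-regularity argument the paper cites to \cite{CS},\cite{D2}: the ball $\mathcal{B}_{d(x)}(x)$ is already tangent to $F(u)$ from the positive side at the nearest point $x_0$, interior Harnack gives $u\ge c\,u(x)$ on $\mathcal{B}_{d/2}(x)$, the annular harmonic barrier (Hopf lemma) converts this into a linear lower bound with slope comparable to $u(x)/d$ at $x_0$, which contradicts the slope-$1$ viscosity asymptotics unless $u(x)\le Kd(x)$, and the gradient bound then follows by rescaling and interior estimates for harmonic functions. One small point to correct: the passing invocation of ``nondegeneracy'' is backwards --- nondegeneracy is the \emph{lower} bound $u\ge c\,d$ (Definition~\ref{nondeg}, a separate hypothesis not proved here), while the upper bound $u\le Kd$ you need is the supersolution side of the free boundary condition accessed through the Hopf barrier; also, since $\mathcal{B}_{d(x)}(x)$ already furnishes the tangent ball, no sliding-ball step or Harnack chain is actually needed, just interior Harnack inside that single ball followed by the annular barrier.
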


%\begin{proof} It is sufficient to show that $u(x) \leq K d(x)$ for
%all $x \in B_{1/2}$ such that $B_{d(x)}(x) \subset B_1.$ Let $x_0$
%be such a point, $d=d(x_0)$ and let

%$$v(x)=\frac{1}{d}u(x_0 +dx)$$ \vspace{0.01in}

%\noindent be the rescale of $u$ in $B_d(x_0)$. Then $v \geq 0$ is
%harmonic in $B_1(0)$, hence, by Harnack's inequality
%$$v \geq c v(0) \ \ \text{in} \ \ B_{1/2}(0).$$ \vspace{0.01in}

%\noindent Let $g$ be the radially symmetric harmonic function in
%the annulus $B_1 \setminus \overline{B_{1/2}}$, $g = 0$ on
%$\partial B_1$ and $g = c v(0)$ on $\partial B_{1/2}.$ Then, by
%the maximum principle
%$$v \geq g \ \
%\text{in} \ \ B_1 \setminus \overline{B_{1/2}}.$$\vspace{0.01in}

%\noindent Now, let $x_1 \in \partial B_1(0)$ be such that
%$v(x_1)=0$ and let $\nu$ be the inward normal to $\partial B_1$ at
%$x_1$.Then, since $u$ solves \eqref{FB1}, we have
%$$v(x) = (x-x_1,\nu)^+ + o(|x-x_1|), \quad \text{as} \ x\rightarrow x_1.$$
%Hence, $$g_\nu(x_1)= c' v(0) \leq 1,$$ which yields the desired
%claim.

%\end{proof}

%\vspace{1.5mm}

\begin{defn}\label{nondeg} We say that a viscosity solution $u$
is nondegenerate in $\mathcal{B}_1$ if there is a constant $c>0$ such that
$u(x)\ge cd(x)$ for all $x\in \mathcal{B}_1^+(u)$.
\end{defn}

\smallskip

We now recall the notion of nontangentially accessible (NTA)
domains.

\begin{defn}\label{NTA}A bounded domain $D$ in $\R^n$ is called NTA, when there exist
constants $M$ and $r_0 >0$ such that:
\begin{enumerate}
\item Corkscrew condition. For any $x \in \p D,$ $r < r_0,$ there exists $y=y_r(x) \in D$ such
that $M^{-1} r < |y-x|<r$ and $\text{dist}(y,\p D) > M^{-1}r;$
\item The Lebesgue density of $D^{c}$ at any of its points is bounded below uniformly by a positive constant $c$, i.e for
all $x \in \partial D, 0 < r <r_0,$ $$\frac{|\mathcal{B}_r(x) \setminus D
|}{|\mathcal{B}_r(x)|} \geq c;$$
\item Harnack chain condition.
If $\eps >0$ and $x_1, x_2$ belong to $D$, $\text{dist}(x_j,\p
D)>\eps$ and $|x_1- x_2|< C_1\eps,$ then there exists a sequence of
$C_2$ balls of radius $c\epsilon$ such that the first ball is centered
at $x_1$, the last at $x_2$, such that the centers of consecutive balls
are at most $c\epsilon/2$ apart.  The number of balls $C_2$ in the chain
depends on $C_1$, but not on $\epsilon$.
\end{enumerate}
\end{defn}

We recall some results about NTA domains \cite{JK}. We start with
the following boundary Harnack principle for harmonic functions.

\begin{thm}\label{BHP}(Boundary Harnack principle) Let $D$ be an $\textrm{NTA}$ domain
and let $V$ be an open set. For any compact set $K \subset V,$
there exists a constant $C$ such that for all positive harmonic
functions $u$ and $v$ in $D$ vanishing continuously on $\partial D
\cap V,$ and $x_0 \in D \cap K,$ $$C^{-1}\frac{v(x_0)}{u(x_0)}u(x)
\leq  v(x) \leq C\frac{v(x_0)}{u(x_0)}u(x), \ \  \textit{for all}
\ \ x \in K \cap \overline{D}.
$$
\end{thm}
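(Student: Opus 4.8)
The plan is to deduce Theorem~\ref{BHP} from a single scale-invariant estimate near one boundary point and then patch together finitely many such estimates. Fix $Q\in\partial D\cap V$ and a scale $r>0$ with $\mathcal{B}_{8r}(Q)\subset V$ and $8r<r_0$; write $\Delta_s=\partial D\cap\mathcal{B}_s(Q)$ and let $A_s=A_s(Q)$ be a corkscrew point for $Q$ at scale $s$. I will show that there are constants $C_0\ge1$, $M_0\ge1$ depending only on the NTA constants and $n$ such that every pair of positive harmonic functions $u,v$ on $D$ vanishing continuously on $\Delta_{8r}$ satisfies
\[
C_0^{-1}\,\frac{u(A_r)}{v(A_r)}\ \le\ \frac{u(x)}{v(x)}\ \le\ C_0\,\frac{u(A_r)}{v(A_r)},\qquad x\in D\cap\mathcal{B}_{r/M_0}(Q).
\]
Granting this \emph{local boundary Harnack estimate}, the theorem follows: cover $K\cap\partial D$ by finitely many balls $\mathcal{B}_{r_i/M_0}(Q_i)$ of the above type, and cover the remaining part of $K$, which stays a fixed positive distance from $\partial D$, by finitely many balls where the ordinary interior Harnack inequality applies. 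Since $D$ is connected and satisfies the Harnack chain condition, and since all the centres involved, together with $x_0$, lie in a bounded region at mutual distances controlled relative to the ambient scale, a number of applications of interior Harnack depending only on $K,V,D$ shows that $u(x_0)/v(x_0)$, the ratios $u(A_{r_i})/v(A_{r_i})$, and the values of $u/v$ at the interior reference points are all mutually comparable. Feeding these comparisons into the local estimate on each $\mathcal{B}_{r_i/M_0}(Q_i)$ gives Theorem~\ref{BHP}, with $C$ depending on $K,V,D$.

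The local estimate rests on three standard consequences of Definition~\ref{NTA}. \textbf{(I) Interior comparability.} Harnack's inequality chained along a Harnack chain shows that \emph{every} positive harmonic function $w$ on $D$ satisfies $w(z)\approx w(A_s)$ whenever $z\in\mathcal{B}_s(Q)\cap D$ and $\mathrm{dist}(z,\partial D)\ge\delta s$, and also $w(A_{2s})\approx w(A_s)\approx w(A_{s/2})$, with implicit constants depending only on $\delta$ and the NTA data; the same holds with $Q$ replaced by any boundary point and $s$ by any scale $<r_0$. \textbf{(II) Carleson estimate.} If $w\ge0$ is harmonic in $D\cap\mathcal{B}_{2s}(Q)$ and vanishes continuously on $\Delta_{2s}$, then $\sup_{D\cap\mathcal{B}_s(Q)}w\le C\,w(A_s)$. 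This is proved by the usual iteration: if $w$ were too large at a point of $D\cap\mathcal{B}_s(Q)$, then by (I) it would be at least that large at the corkscrew for that point at its own distance-to-$\partial D$ scale, and iterating produces interior balls marching toward $\partial D$ inside $\mathcal{B}_{2s}(Q)$ on which $w$ grows geometrically, contradicting the finiteness of $w$; the exterior density condition of Definition~\ref{NTA} enters through a subharmonic barrier supplying the geometric decrease at each step. \textbf{(III) Nondegeneracy of harmonic measure.} A Wiener-type barrier, again built from the exterior density condition, shows that $\omega^z_D(\Delta_{2t}(q))\ge C^{-1}$ whenever $q\in\partial D$, $t<r_0$, and $z\in D$ with $|z-q|\le t$; in particular, since every $z\in D\cap\partial\mathcal{B}_{2r}(Q)$ has its nearest boundary point within $\Delta_{4r}(Q)$, this yields $\omega^z_D(\Delta_{8r}(Q))\ge C^{-1}$ for all such $z$.

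With these tools the local estimate is obtained by comparing $u$ and $v$ to the single auxiliary function $\omega^x:=\omega^x_D(\Delta_{8r})$. For the \textbf{upper bound}, work in $D':=D\cap\mathcal{B}_{2r}(Q)$: on $\partial D'\cap\partial D=\Delta_{2r}\subset\Delta_{8r}$ one has $u=0\le\omega^\cdot$, while on $D\cap\partial\mathcal{B}_{2r}(Q)$ the Carleson estimate (II) gives $u\le Cu(A_{2r})\approx Cu(A_r)$ and (III) gives $\omega^\cdot\ge C^{-1}$; hence $u\le C'u(A_r)\,\omega^\cdot$ on $\partial D'$, so by the maximum principle $u(x)\le C'u(A_r)\,\omega^x$ for $x\in D\cap\mathcal{B}_r(Q)$, and likewise for $v$. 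For the \textbf{lower bound}, (I) gives $u\ge c\,u(A_r)$ on a fixed interior ball $\mathcal{B}_\ast$ of radius $\sim r/M$ about $A_r$; propagating this lower bound by the maximum principle in $D'\setminus\overline{\mathcal{B}_\ast}$ against the harmonic measure of $\partial\mathcal{B}_\ast$, and then identifying that harmonic measure with $\omega^x$ up to constants by one more application of (II) and the Harnack chain condition, yields $u(x)\ge c'u(A_r)\,\omega^x$ for $x\in D\cap\mathcal{B}_{r/M_0}(Q)$, and likewise for $v$. Dividing the two-sided bounds for $u$ by those for $v$ cancels the common factor $\omega^x$ and produces the local estimate.

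The step I expect to be the main obstacle is the lower bound $u(x)\gtrsim u(A_r)\,\omega^x_D(\Delta_{8r})$: one must carry a lower bound from a fixed interior ball all the way into the thin collar along $\partial D$ without losing a constant that deteriorates as $x$ approaches the boundary, and this is exactly what the corkscrew condition (keeping $D$ uniformly fat near every boundary point at every scale) and the Harnack chain condition (connecting interior points across all scales by chains of bounded length) are designed to prevent; the delicate bookkeeping is to arrange the comparisons so that every constant that appears is scale-invariant. The upper bound and the patching, by contrast, are routine once (I)--(III) are in hand.
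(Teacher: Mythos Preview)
The paper does not prove Theorem~\ref{BHP}; it is quoted as a known result from \cite{JK} and used as a black box. Your sketch follows the classical Jerison--Kenig route (Carleson estimate, nondegeneracy of harmonic measure, comparison of both $u$ and $v$ to $\omega^x(\Delta_{8r})$ via the maximum principle, then patching by interior Harnack), so there is nothing to compare against in the paper itself---your approach \emph{is} essentially the one the paper is citing.

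One remark on content: the lower bound step you flag as the main obstacle is indeed where the work lies, and your description of it (``identifying that harmonic measure with $\omega^x$ up to constants by one more application of (II) and the Harnack chain condition'') is a bit compressed. In \cite{JK} this is carried out via the doubling property of harmonic measure together with an iterated comparison at dyadic scales; the key scale-invariant fact is that the Green function $G(A_r,\cdot)$ and the harmonic measure density are comparable in each Carleson box, which is what lets you push the lower bound from the corkscrew ball down to the boundary without degeneration. Your outline is correct in spirit, but if you were to write this out in full you would want to make that dyadic iteration explicit rather than appeal to a single maximum-principle comparison.
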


\smallskip
The boundary Harnack inequality above will be our main tool in the
proof of Theorem \ref{main}. We will also need some further facts.
First, recall that for any bounded domain $D \subset \R^n$ and any
arbitrary $y_0 \in D$, one can define the harmonic measure
$\omega^{y_0}$ of $D$ evaluated at $y_0$ (for the definition see
for example \cite{JK}). We note that for any $y_1,y_2 \in D$, the
measures $\omega^{y_1}$ and $\omega^{y_2}$ are mutually absolutely
continuous. Hence, from now on we fix a point $y_0 \in D$ and
denote $\omega=\omega^{y_0}.$

A nontangential region at
$x_0 \in \partial D$ is defined as
$$\Gamma_\alpha(x_0)=\{x \in D : |x-x_0|<(1+\alpha)\text{dist}(x,\partial
D)\}.$$ Let $u$ be defined on $D$ and $f$ on $\partial D$. We say
that $u$ converges to $f$ nontangentially at $x_0 \in
\partial D$ if for any $\alpha,$
$$\lim_{x\rightarrow x_0} u(x)=f(x_0) \quad \text{for} \ x\in
\Gamma_\alpha(x_0).$$

The following Fatou-type theorem was proved in \cite{JK}.
\begin{thm}\label{Fatou} Let $D$ be an NTA domain. If $u$ is a positive harmonic
function in $D$, then $u$ has finite nontangential limits for
$\omega$-almost every $x_0 \in \partial D.$
\end{thm}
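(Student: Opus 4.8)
The plan is to reduce the statement to real-variable harmonic analysis on the boundary equipped with harmonic measure. Fix the reference point $y_0 \in D$, write $\omega = \omega^{y_0}$, let $G(\cdot,y_0)$ be the Green function with pole $y_0$, and for $Q \in \partial D$, $r < r_0$, set $\Delta_r(Q) = \mathcal{B}_r(Q)\cap\partial D$ and let $A_r(Q)$ be a corkscrew point for $\Delta_r(Q)$, so $\text{dist}(A_r(Q),\partial D) \approx r \approx |A_r(Q)-Q|$. The two ingredients I would establish are: (i) $(\partial D, |\cdot|, \omega)$ is a space of homogeneous type, i.e.\ $\omega$ is a doubling measure, together with sharp size bounds for the Martin kernel and the Martin representation of positive harmonic functions; and (ii) a Lebesgue-differentiation / Fatou argument on such a space. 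Step (ii) is classical real-variable analysis; essentially all the work is in step (i), which is where the boundary Harnack principle, Theorem~\ref{BHP}, enters.

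For step (i): the corkscrew and exterior-density conditions together with the maximum principle give the comparison $\omega(\Delta_r(Q)) \approx r^{n-2}G(A_r(Q),y_0)$, and, combined with the Harnack chain condition, the doubling bound $\omega(\Delta_{2r}(Q)) \le C\,\omega(\Delta_r(Q))$. Next, for any positive harmonic $u$ in $D$ vanishing continuously on $\partial D \cap V$, the Harnack chain condition yields $u(x) \approx u(A_r(Q))$ for $x \in \Gamma_\alpha(Q)$ at height $|x-Q|\approx r$; applying Theorem~\ref{BHP} to the pair $u$, $G(\cdot,y_0)$ upgrades this to $u(x) \approx c_u\, G(A_r(Q),y_0)/r^{n-2} \approx c_u/\omega(\Delta_{|x-Q|}(Q))$, with $c_u$ depending only on $u$. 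The same comparison, applied now to competing positive harmonic functions that vanish off a boundary point, shows that $K(x,Q) := \lim_{y\to Q} G(x,y)/G(y_0,y)$ exists along every sequence $y \to Q$, is a positive harmonic function of $x$ vanishing on $\partial D \setminus \{Q\}$, is unique up to normalization, and obeys $K(x,Q) \approx 1/\omega(\Delta_{|x-Q|}(Q))$ on $\Gamma_\alpha(Q)$. This identifies the Martin boundary of $D$ with $\partial D$ and gives the Martin representation: every positive harmonic $u$ on $D$ has the form $u(x) = \int_{\partial D} K(x,Q)\,d\mu(Q)$ for a unique finite positive Borel measure $\mu$; with the normalization $K(y_0,\cdot)\equiv 1$ one has $\int_{\partial D} K(x,Q)\,d\omega(Q) = 1$ (the representing measure of the constant $1$ is $\omega$).

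For step (ii): decompose $\mu = f\,d\omega + \mu_s$ with $f\in L^1(\omega)$ and $\mu_s \perp \omega$ (Lebesgue decomposition, legitimate since $\omega$ is a finite Borel measure). The kernel bound $K(x,Q)\approx 1/\omega(\Delta_{|x-Q|}(Q))$ and the doubling property give, by a Vitali covering argument on $(\partial D,\omega)$, the weak-type estimate $\omega(\{Q : N_\alpha u(Q) > \lambda\}) \le C\,\mu(\partial D)/\lambda$ for the nontangential maximal function $N_\alpha u(Q) = \sup_{\Gamma_\alpha(Q)} u$. Applying this to the tails $\int_{\partial D\setminus\Delta_\rho(Q_0)} K(x,Q)\,d\mu(Q)$ one shows: (a) for the absolutely continuous part, $\int K(x,Q) f(Q)\,d\omega(Q) \to f(Q_0)$ as $x\to Q_0$ nontangentially, at every $\omega$-Lebesgue point $Q_0$ of $f$, hence $\omega$-a.e., using that $\int_{\partial D} K(x,Q)\,d\omega(Q)=1$ while $\int_{\partial D\setminus\Delta_\rho(Q_0)} K(x,Q)\,d\omega(Q) = \omega^x(\partial D\setminus\Delta_\rho(Q_0)) \to 0$ as $x \to Q_0$; and (b) for the singular part, $\limsup_{x\to Q_0}\int K(x,Q)\,d\mu_s(Q) \le C\limsup_{r\to 0}\mu_s(\Delta_r(Q_0))/\omega(\Delta_r(Q_0))$, which vanishes for $\omega$-a.e.\ $Q_0$ by the differentiation theorem for mutually singular measures on a doubling metric space. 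Adding (a) and (b) yields the finite nontangential limit $f(Q_0)$ for $\omega$-a.e.\ $Q_0$.

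The main obstacle is step (i), and inside it the kernel-function analysis: passing from the qualitative comparison in Theorem~\ref{BHP} to the quantitative doubling of $\omega$ and, above all, to the existence, uniqueness, and sharp size of $K(x,Q)$ — equivalently, the identification of the Martin boundary with $\partial D$. This forces one to iterate the boundary Harnack principle across all scales with constants uniform in the scale, and it is the genuinely hard part of the argument. Once it is in place, step (ii) is a routine application of the theory of doubling measures and the Lebesgue differentiation theorem on spaces of homogeneous type.
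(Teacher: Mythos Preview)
The paper does not prove Theorem~\ref{Fatou}; it simply quotes it from \cite{JK}. Your outline is essentially the argument of \cite{JK}: doubling of harmonic measure via the Carleson-type estimate $\omega(\Delta_r(Q))\approx r^{n-2}G(A_r(Q),y_0)$, identification of the Martin boundary with $\partial D$ (existence and uniqueness of kernel functions) by iterating the boundary Harnack principle across scales, the kernel bound $K(x,Q)\approx \omega(\Delta_{|x-Q|}(Q))^{-1}$ in $\Gamma_\alpha(Q)$, and then Lebesgue decomposition plus differentiation on the homogeneous space $(\partial D,\omega)$. That is the correct route and there is nothing substantively different to compare.

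Two small remarks on presentation. First, the sentence ``for any positive harmonic $u$ in $D$ vanishing continuously on $\partial D\cap V$ \ldots\ $u(x)\approx c_u/\omega(\Delta_{|x-Q|}(Q))$'' is a local statement and the constant $c_u$ should be tied to a fixed normalization point (e.g.\ $u(A_{r_0}(Q))$); as written it reads as if $c_u$ were global. Second, in step (b) the pointwise inequality you state for the singular part really comes from splitting the integral into $\Delta_\rho(Q_0)$ and its complement and using, on the complement, that $K(\cdot,Q)$ vanishes continuously at $Q_0$ for $Q\neq Q_0$ (together with the maximal-function weak type bound to pass to $\omega$-a.e.\ limits); it does not follow from the single kernel bound alone. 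Neither point is a gap, but both are worth writing out if you intend this as a self-contained proof.
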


We deduce from this the following regularity result for NTA free
boundaries.
\begin{lem}\label{full harmonic measure}Let $u$ be a viscosity
solution to \eqref{fbintro} in $\mathcal{B}_1$, $u$  non-degenerate in $\mathcal{B}_{3/4}$, and
$0 \in F(u)$. Assume that there is an NTA domain $D$ such that
$D\subset \mathcal{B}_1^+(u)$ and $F(u)\cap \mathcal{B}_{3/4}\subset \partial D$.
Then, $F(u)\cap \mathcal{B}_{1/2}$ is smooth almost everywhere with
respect to harmonic measure $\omega$ of $D$.
\end{lem}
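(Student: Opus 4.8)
The plan is to combine the Fatou-type theorem (Theorem \ref{Fatou}) for the NTA domain $D$ with the viscosity free boundary condition encoded in Definition \ref{defsol}(ii), and then to invoke the hypothesis $0\in F(u)$ together with nondegeneracy to upgrade pointwise a.e.\ differentiability of the boundary to genuine smoothness. First I would recall that $u$, restricted to $D$, is a positive harmonic function vanishing continuously on $\partial D\cap \mathcal{B}_{3/4}$ (it vanishes on $F(u)$ by continuity, and $F(u)\cap\mathcal{B}_{3/4}\subset\partial D$). By Theorem \ref{Fatou}, $u$ has a finite nontangential limit at $\omega$-a.e.\ $x_0\in\partial D$; since $u$ vanishes on $F(u)$, that limit is $0$ at $\omega$-a.e.\ point of $F(u)\cap\mathcal{B}_{1/2}$. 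The quantitative content I actually want, however, is not merely that the limit is zero but that the \emph{blow-up} of $u$ at such points is linear, i.e.\ $u(x) = \langle x-x_0,\nu\rangle^+ + o(|x-x_0|)$ for some unit vector $\nu$; this is what forces $F(u)$ to have a tangent plane at $x_0$, hence (via Definition \ref{defsol}(ii) applied with tangent balls, plus the elliptic regularity remark following it) to be smooth near $x_0$.

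The mechanism to get linearity is the following. Fix $x_0\in F(u)\cap\mathcal{B}_{1/2}$. By Lemma \ref{Lipconst}, $u(x)\le K\,d(x)\le K|x-x_0|$, and by nondegeneracy $u(x)\ge c\,d(x)$ on $\mathcal{B}_{3/4}^+(u)$. Consider the rescalings $u_\rho(x):=u(x_0+\rho x)/\rho$. These are uniformly Lipschitz, nondegenerate, harmonic in $\mathcal{B}_{3/4/\rho}^+(u_\rho)$, and viscosity solutions to \eqref{fbintro}; by standard compactness (Alt--Caffarelli theory, or the argument in \cite{C1}) a subsequence converges locally uniformly to a global viscosity solution $u_0$ which is a \emph{blow-up limit}. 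The key point is that at $\omega$-a.e.\ $x_0$ the blow-up is unique and equals a half-plane solution $\langle x,\nu\rangle^+$. This is where the NTA/Fatou input enters: the nontangential limit statement combined with the boundary Harnack principle (Theorem \ref{BHP}) applied to $u$ and to a comparison harmonic function (say the Green function with pole at $y_0$, or $\partial u/\partial x_n$) pins down the direction $\nu$ and rules out blow-ups that are not flat, because any two blow-up limits would be harmonic functions on the same NTA-type cone vanishing on its boundary, hence proportional by Theorem \ref{BHP}, which with the nondegeneracy normalization forces them to coincide and to be linear.

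I expect the main obstacle to be precisely this last step: extracting from "finite (hence zero) nontangential limit $\omega$-a.e." the stronger conclusion "the blow-up at $\omega$-a.e.\ point is a single half-plane solution." One has to be careful that the nontangential approach regions $\Gamma_\alpha(x_0)$ see enough of $\mathcal{B}^+(u)$ near $x_0$ — this is guaranteed by the corkscrew and Harnack-chain conditions in Definition \ref{NTA}, which ensure $\mathcal{B}^+(u)$ is "thick" along every approach direction — and that the absolute continuity of harmonic measure with respect to surface measure is not needed; we only ever work with $\omega$ itself. Once a unique flat blow-up is established at $\omega$-a.e.\ $x_0$, the free boundary has at such a point a tangent ball from both sides (for radii along a subsequence of scales, then improved to all small scales by the linear blow-up), so Definition \ref{defsol}(ii) applies, giving $u(x)=\langle x-x_0,\nu\rangle^+ + o(|x-x_0|)$; flatness at one scale then improves by Caffarelli's $C^{1,\alpha}$ theorem \cite{C1}, yielding that $F(u)$ is a smooth hypersurface in a neighborhood of $x_0$. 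Since this holds for $\omega$-a.e.\ $x_0\in F(u)\cap\mathcal{B}_{1/2}$, the set of non-smooth points of $F(u)\cap\mathcal{B}_{1/2}$ has $\omega$-measure zero, which is the assertion of the lemma.
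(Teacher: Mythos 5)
You identify the right overall strategy — use Theorem \ref{Fatou}, get flatness $\omega$-a.e., then upgrade flatness to smoothness via \cite{AC,C2} — and you also honestly flag the weak spot. The flag is warranted: the mechanism you propose to bridge the gap does not work, and the paper bridges it differently.

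The problem is that you apply Theorem \ref{Fatou} to $u$ itself. That only tells you that $u$ has nontangential limit $0$ at $\omega$-a.e.\ point of $F(u)$, which is vacuous (it holds at \emph{every} free boundary point by continuity). To extract linear behavior you then propose a compactness/blow-up argument and try to pin down the blow-up using Theorem \ref{BHP}. But the boundary Harnack principle only says that any two positive harmonic functions on the blow-up domain that vanish on its boundary are proportional; it does not say the blow-up domain is a half-space, and it certainly does not say the functions are linear. Indeed, the paper explicitly observes that the singular conic solution of Alt and Caffarelli \emph{is} NTA, so "positive harmonic on an NTA cone, vanishing on its boundary" does not force linearity. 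Uniqueness-up-to-constants of harmonic functions on the blow-up does not rule out a non-flat blow-up; you would be proving that all blow-ups at a fixed $x_0$ along different subsequences are the same function, not that this function is $\langle x,\nu\rangle^+$. So the step "rules out blow-ups that are not flat" is not justified.

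The paper instead applies Theorem \ref{Fatou} to each partial derivative $\partial u/\partial x_j$. These are bounded harmonic functions in $D$ (bounded by Lemma \ref{Lipconst}), so — after adding a constant to make them positive — Theorem \ref{Fatou} gives a vector $a \in \R^n$ with $\nabla u(x) \to a$ nontangentially at $\omega$-a.e.\ $x_0 \in F(u)\cap \mathcal{B}_{1/2}$. The rest of the proof is the quantitative work of converting this nontangential convergence of the \emph{gradient} into flatness of $F(u)$ near $x_0$: one rescales $u_r(x) = u(rx)/r$ (with $x_0=0$), uses the NTA corkscrew/Harnack-chain structure to build paths $p(t)$ from a generic point $x$ to a fixed interior point $z$ lying in the nontangential region $\Gamma_\alpha^r$, integrates $\nabla u_r - a$ along these paths to get $|u_r - L| \le C_3\delta$ on $\{u_r\ge\delta\}$ with $L(x) = u_r(z) + a\cdot(x-z)$, checks $|a|\approx 1$ using nondegeneracy, and finally concludes that $F(u_r)\cap\mathcal{B}_1$ lies within $C_4\delta$ of the hyperplane $\{L=0\}$. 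Flatness then gives smoothness by \cite{AC,C2}. In short: the key input you are missing is Fatou for $\nabla u$, not Fatou for $u$; once you have a limiting gradient $a\ne 0$, the half-plane blow-up comes for free, and you never need a blow-up compactness argument or a BHP-based uniqueness claim at all.
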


\begin{proof} Since each partial derivative  $\partial u/\partial x_j$
is a bounded harmonic
function, Theorem \ref{Fatou} implies that for $\omega$-almost every
$x_0\in F(u)\cap \mathcal{B}_{1/2}$, there exists $a\in \R^n$ such that for
every $\alpha<\infty$, $\nabla u(x)\to a$ as $x\to x_0$, for $x\in  \mathcal{B}_1^+(u)$,
$|x-x_0| <
(1+\alpha)\mbox{dist}(x,F(u))$.   We will prove that
$F(u)$ is flat and hence smooth in a neighborhood of $x_0$.
The idea of the proof is to show that for $x$ near $x_0$, $u$ is close
to a linear function with gradient $a$.  Provided that $a$ is not the zero
vector, this will show us that the level sets of $u$ are flat and hence (by
\cite{AC, C2}) that the free boundary is smooth near $x_0$.

For notational simplicity assume $x_0=0$.  Denote
by $u_r$ the rescaling of $u$, $u_r(x)=u(rx)/r$.  We will use the notation
$A_1 \approx A_2$ for positive numbers that are comparable modulo constants
that depend only on the NTA constants and the ratio of $u(x)$
to the distance to the free boundary (bounded above and below by
Lemma \ref{Lipconst}
and nondegeneracy).  Consider a point $z\in \mathcal{B}_1^+(u_r)$
such that $u_r(z) \approx 1$.  Note that although the point $z$ depends
on $r$, we require the constants in comparability of $u_r(z)$ with $1$ to be
independent
of $r$ as $r\to0$.   For any $x\in \mathcal{B}_1\cap \overline{\{u_r > 0\}}$, the
NTA properties imply there is a (nontangential, corkscrew) path $p(t)$
such that $p(0)=x$, $p(1)= z$, $|p'(t)|\le C$ and
\[
u_r(p(t)) \approx \mbox{dist}(p(t),F(u_r)) \approx  t + u_r(p(0))
\]
independent of $r$.

Fix $C_2 <<C_1<\infty$, $\delta>0$, and  denote,
\[
T_\delta^r =\{x \in \mathcal{B}_{C_1}^+(u_r) : u_r(x) \ge \delta\},
\]
\[
\Gamma_{\alpha}^r=\{x \in \mathcal{B}_{1/r}^+(u_r): |x|<
(1+\alpha)\text{dist}(x,F(u_r))\}.
\]
Since $u_r(x)$ is comparable to the distance from $x$ to $F(u_r)$,
for any $x\in T_\delta^r\cap \mathcal{B}_{C_2}$, there is a constant $c>0$ such that
the path $p(t)$ from $x$ to $z$ belongs to $T_{c\delta}^r$.
Choose $\alpha$ sufficiently large depending on $\delta$ and $c>0$
and $r$ sufficiently small depending on $C_1$ such that
\[
T_{c\delta}^r \subset \Gamma_{\alpha}^r.
\]
Thus there is $r_0>0$ (depending on $C_1$, $\delta$,
and $\alpha$) such that that for $r<r_0$,
\[
|\nabla u_r(x) - a| < \delta, \quad \mbox{for } x\in T_{c\delta}^r.
\]
Define a linear function of $x$, by $L(x) = u_r(z) + a\cdot(x-z)$.
For all $x\in T_\delta^r\cap \mathcal{B}_{C_2}$, since $u_r(z) - L(z) = 0$, and
$p(t)\in T_{c\delta}^r$ ,
\[
|u_r(x) - L(x)| =
\left|\int_0^1 (\nabla u_r(p(t))-a)\cdot p'(t)dt\right| \le C_3\delta.
\]
In all, we have shown that for every $x\in \mathcal{B}_{C_2}$ such that $u_r(x)\ge \delta$,
\[
|u_r(x) - L(x)| \le C_3\delta.
\]

Next, we deduce that $|a|\approx 1$.  (The upper bound $|a|\le K$
already follows from the upper bound on $|\nabla u|$.) Since
$u_r(0)=0$ and $u_r(z)\approx 1$, for some $0<t <1$, the point
$x=tz$ satisfies $u_r(x)=\delta$.  So $x\in T_\delta^r\cap
\mathcal{B}_{C_2}$ and $|\delta - u_r(z) -a\cdot(x-z)|\le
C_3\delta$. Hence, $|a| \ge |a\cdot(x-z)| \ge u_r(z) - \delta -
C_3\delta \ge u_r(z)/2$. (All we need in what follows is that $a$ is
bounded and nonzero.)

We can now conclude that the free boundary is flat in the appropriate
sense.  Consider a point $x\in F(u_r)\cap \mathcal{B}_1$ and its path $p(t)$ to $z$.
There is $t>0$ such that $u_r(p(t))=\delta$.  Denote $y=p(t)$. Then
$|y-x|\le C\delta$ and
$y\in T_\delta\cap \mathcal{B}_{C_2}$.  The preceding
argument says $|u_r(y)-L(y)| \le C_3\delta$.  Therefore,
\[
|L(x)| \leq |L(y)| + |L(x)-L(y)| \le |u_r(y) -L(y)| + |u_r(y)| + |a\cdot(x-y)| \le C_4\delta
\]
for a larger constant $C_4$.   Since $a$ is bounded
away from $0$ in length, the bound on $L(x)$ implies that every point
of $F(u_r)\cap \mathcal{B}_1$
is within a distance a constant times $\delta $ of the plane $L(x)=0$.
For sufficiently small $\delta$, this flatness condition implies smoothness
of the free boundary (see \cite{AC,C2}).

\end{proof}

\subsection{The proof of Theorem \ref{main}.}
Throughout the proof, $c_i, C_i$ denote constants depending
on $L,n$, and possibly on the NTA constants.  Also, a point $x \in \mathbb{R}^n$ may be denoted by
$(x',x_n),$ with $x'=(x_1,\ldots,x_{n-1})$.

We divide the proof in three steps.

\medskip

\noindent \textbf{Step 1: Nondegeneracy and separation of level sets.}

\medskip

We show first the nondegeneracy of $u$, namely that
if $\mathcal{B}_\rho(x_0) \subset C_L^+(u)$, $\rho <1$, then
\begin{equation}\label{nondegeq}
u(x_0) \ge \gamma_n\rho
\end{equation}
for a dimensional constant $\gamma_n>0$.

Denote by
$g$ a strictly superharmonic function on the annulus $E = \mathcal{B}_2\backslash \mathcal{B}_1$\
such that
\[
  \begin{cases}
    g= a_n & \text{on $\partial \mathcal{B}_2$}, \\
    g= 0 & \text{on $
 \partial \mathcal{B}_1 $},\\
    |\nabla g| < 1 & \text{on $\partial \mathcal{B}_1$,}
  \end{cases}
\] with $a_n>0$ small dimensional constant.
Let $r = \rho/4$. Denote $g_r(x) = rg(x/r)$, and
\[
h_t(x) = g_r(x-x_0-te_n)
\]
defined on the closed annulus $E_t = \bar{\mathcal{B}}_{2r}(x_0 +
te_n) \backslash \mathcal{B}_r(x_0+te_n)$. For $t$ sufficiently
small, $E_t \subset \{x: -L < x_n < -L +1\}$ so that $h_t(x) \ge 0
= u(x) $ for $x\in E_t$.  Increasing $t$ translates the region
$E_t$ upwards.  Let $t_0$ be the least $t$ for which the graph of
$h_t$ touches the graph of $u$, i.~e., so that there is a point
$z_0\in E_t$ for which $h_{t}(z_0) = u(z_0) > 0$.  Because $h_t$
is a strict supersolution the point $z_0$ belongs to the outer
boundary, $z_0\in \partial \mathcal{B}_{2r}(x_0 + t_0e_n)$.
Furthermore, because the free boundary of $u$ and $h_t$ can't
touch, $t_0 \le  -\rho - r < 0$.  Monotonicity of $u$ implies
$u(z_0 - t_0e_n) \ge u(z_0) = h_{t_0}(z_0) = a_nr$.  Finally,
since $|z_0-t_0e_n -x_0| = 2r = \rho/2$,  Harnack's inequality
comparing the value of $u$ at $z_0 - t_0 e_n$ and $x_0$ implies
that there is a dimensional constant $\gamma_n>0$ such that
$u(x_0) \ge \gamma_n\rho$, as required.

Next, we will show that level sets near the top of the cylinder are
separated by an appropriate amount.  Let $\epsilon >0$ and denote by
\[
v(x)=u(x-\epsilon e_n).
\]
Since $u$ is strictly
monotone in the vertical direction, $v(x) < u(x)$ on
$\mathcal{C}^+_{L}(u)$.  We claim that
\begin{equation}\label{v<u}
v(x) \leq u(x) - c_1\epsilon
\ \ \ \textrm{on $B_{9/10}(0)
\times \{L-1/2\}$}
\end{equation}
for $\epsilon < \epsilon_n$ a dimensional constant, and a constant
$c_1>0$ depending only on $L$ and $n$.  To prove \eqref{v<u},
note first that from \eqref{nondegeq} it follows that $u(x) \ge b_n$
for all $x\in
B_{9/10}(0) \times \{L-1/2\}$.  Write $x = (x',L-1/2)$ and let
$t_n$ be such that $u(x',t_n) = b_n/2$, then by monotonicity
$u(x',t) \ge b_n/2$ for all $t \ge t_n$. Consider the segment from
$(x',t_n)$ to $(x',L-1/2)$. It follows from the Lipschitz bound
(Lemma \ref{Lipconst}) that the distance from
any point of the segment
to the free boundary is greater than a dimensional constant.  Thus
by Harnack's inequality the values of $w(x) = (\partial/\partial
x_n)u(x)$ on this segment are comparable with a constant depending
only on $n$ and $L$.  Furthermore,
\[
b_n - b_n/2 \le u(x)-u(x',t_n) = \int_{t_n}^{L-1/2} w(x',t) dt.
\]
Therefore, the minimum of $w$ on this segment is bounded below by a constant
$c_1>0$, depending only on $n$ and $L$.  In particular,
\[
u(x)-v(x) = \int_{L-1/2 - \epsilon}^{L-1/2} w(x',t) dt \ge c_1 \epsilon.
\]

\medskip

\noindent \textbf{Step 2: Construction of a family of
supersolutions.}

\medskip

The hypothesis of Theorem \ref{main} implies (by the construction of
P. W. Jones \cite{J}) that there is an NTA domain between any
pair $\mathcal{C}(r_1,L-a_1)$ and $\mathcal{C}(r_2,L-a_2)$ for $r_1 < r_2 \leq 9/10$
and $a_1 > a_2 \geq 1/2$.  Thus the boundary Harnack inequality,
Theorem \ref{BHP}, has the following corollary.
\begin{cor}\label{corharnack} Let $u$ be as in Theorem \ref{main} and
let $r_1 < r_2 \le 9/10$ and $a_1 > a_2 \ge 1/2$. Then
there is a constant $A$ depending on $L$, the NTA constants of $\mathcal D$,
$r_2-r_1>0$, and $a_1-a_2>0$ such that if
$h_1$ and $h_2$ are positive harmonic
functions on $\mathcal{C}(r_2,L-a_2)\cap \mathcal{C}_L^+(u)$, vanishing on $\p D \cap \mathcal{C}(r_2,L-a_2)$ then
\[
h_1(x)/h_2(x) \le Ah_1(y)/h_2(y)
\]
for every $x$ and $y$ in $\mathcal{C}(r_1,L-a_1)\cap \mathcal{C}_L^+(u)$.
\end{cor}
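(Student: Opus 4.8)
The plan is to obtain the corollary by feeding a suitable NTA domain, squeezed between the two cylinders, into the boundary Harnack principle (Theorem \ref{BHP}). I would first fix intermediate parameters $r' := (r_1+r_2)/2$ and $a' := (a_1+a_2)/2$, so that $r_1 < r' < r_2 \le 9/10$ and $1/2 \le a_2 < a' < a_1$, and then invoke the construction of P. W. Jones recalled in the paragraph preceding the statement — which uses the sandwiching hypothesis $\mathcal{C}(9/10,L-1/2)\cap\mathcal{C}_L^+(u)\subset\mathcal D\subset\mathcal{C}_L^+(u)$ of Theorem \ref{main}, together with the fact that $\mathcal{C}(r_2,L-a_2)\subset\mathcal{C}(9/10,L-1/2)$ since $r_2\le 9/10$ and $a_2\ge 1/2$ — to produce a bounded NTA domain $D_1$ with
\[
\mathcal{C}(r',L-a')\cap\mathcal{C}_L^+(u)\subset D_1\subset\mathcal{C}(r_2,L-a_2)\cap\mathcal{C}_L^+(u),
\]
and whose NTA constants depend only on $L$, the NTA constants of $\mathcal D$, $r_2-r_1$, and $a_1-a_2$.

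Next I would verify that $h_1$ and $h_2$ meet the hypotheses of Theorem \ref{BHP} on $D_1$. Because $\mathcal{C}(r',L-a')\cap\mathcal{C}_L^+(u)\subset D_1\subset\mathcal{C}_L^+(u)$, inside the open cylinder $\mathcal{C}(r',L-a')$ the domain $D_1$ coincides with $\mathcal{C}_L^+(u)$, so $\partial D_1$ there is exactly $F(u)$; and since $\mathcal{C}(r',L-a')\subset\mathcal{C}(r_2,L-a_2)$, this portion of $\partial D_1$ lies in $\partial D\cap\mathcal{C}(r_2,L-a_2)$, on which $h_1$ and $h_2$ vanish continuously by hypothesis. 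Moreover $h_1,h_2$ are harmonic on $D_1$ because $D_1\subset\mathcal{C}(r_2,L-a_2)\cap\mathcal{C}_L^+(u)$. I would then apply Theorem \ref{BHP} with this $D_1$, with $V:=\mathcal{C}(r',L-a')$, and with the compact set $K:=\overline{\mathcal{C}(r_1,L-a_1)}$, which satisfies $K\subset V$ with $\text{dist}(K,\partial V)\ge\min\{(r_2-r_1)/2,(a_1-a_2)/2\}$. (If $\mathcal{C}(r_1,L-a_1)\cap\mathcal{C}_L^+(u)=\emptyset$ there is nothing to prove; otherwise I fix any reference point $x_0$ in it and note $x_0\in D_1\cap K$.) The theorem then supplies a constant $C$ depending only on the NTA constants of $D_1$ and on the location of $K$ inside $V$ — hence, by the first step, only on $L$, the NTA constants of $\mathcal D$, $r_2-r_1$, and $a_1-a_2$ — for which $C^{-1}h_1(x_0)/h_2(x_0)\le h_1(x)/h_2(x)\le C\,h_1(x_0)/h_2(x_0)$ for all $x\in K\cap\overline{D_1}$. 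Since $\mathcal{C}(r_1,L-a_1)\cap\mathcal{C}_L^+(u)\subset K\cap\overline{D_1}$, dividing these inequalities at two such points $x$ and $y$ gives $h_1(x)/h_2(x)\le C^2\,h_1(y)/h_2(y)$, and the claim follows with $A:=C^2$.

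The main obstacle, and really the only substantive point, is the first step: the domain $D_1$ must simultaneously fill up the entire positive phase inside the inner cylinder $\mathcal{C}(r',L-a')$ — so that what one sees of $\partial D_1$ inside $V$ is the free boundary, where $h_1$ and $h_2$ vanish, rather than some artificial boundary introduced by the cutting procedure — while remaining inside $\mathcal{C}(r_2,L-a_2)\cap\mathcal{C}_L^+(u)$ and staying NTA with constants controlled only by the stated quantities. This is precisely the content of the Jones construction cited just before the corollary; granting it, everything that follows is the bookkeeping above, the one thing to keep track of being which pieces of $\partial D_1$ carry the boundary vanishing of $h_1$ and $h_2$.
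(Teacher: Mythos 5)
Your proposal is correct and follows exactly the route the paper intends: the Jones localization theorem (cited just before the corollary) produces an NTA domain sandwiched between the two cylinders, and the boundary Harnack principle (Theorem \ref{BHP}) is applied to it; the paper leaves this unpacking implicit, while you carry it out carefully, including the useful technical move of introducing the intermediate cylinder $\mathcal{C}(r',L-a')$ so that the compact set $K=\overline{\mathcal{C}(r_1,L-a_1)}$ sits properly inside the open set $V$ and the relevant portion of $\partial D_1\cap V$ is precisely $F(u)$.
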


In this step we start our analysis on the cylinder
$\mathcal{C}(9/10,L-1/2)$ which by abuse of notation we denote by
$\mathcal{C}_1$. Then we restrict to smaller cylinders
$\mathcal{C}_2, \mathcal{C}_3$ with base $B_{8/10}$ and $B_{7/10}$
respectively, height $M$ with $L-1< M < L-1/2$ and $\mathcal{C}_3
\subset \subset \mathcal{C}_2 \subset \subset \mathcal{C}_1$.

Let $w$ be the harmonic function in $\mathcal{C}_1^+(u),$
satisfying the following boundary conditions:
\begin{align}
&w=0, \ \ \   \textrm{on $F(u)$},\\
&v < w \leq u, \ \ \    \textrm{on $
\overline{\mathcal{C}^+_{1}(u)} \cap
\partial \mathcal{C}_{1}$,}\\\label{v+<w<u-} &v + \frac{c_1}{4}\eps < w <
u - \frac{c_1}{4}\epsilon, \ \ \ \textrm{on
$B_{9/10} \times \{L-1/2\}$}.
\end{align}
%Extend $w$ to be zero in $\mathcal{C}^-_{1}(u)$.

Notice that
\eqref{v+<w<u-} can be achieved because of the gap \eqref{v<u} between $u$ and $v$.
Since $v$ is subharmonic and  $u$ is harmonic in
$\mathcal{C}^+_{1}(u)$, the maximum principle implies
\begin{equation}\label{w<u}v < w < u \ \ \ \textrm{in $\mathcal{C}^+_{1}(u).$}\end{equation}
Moreover, $\mathcal{C}^+_{1}(w)=\mathcal{C}^+_{1}(u),$
and $F(w)=F(u)\cap \mathcal{C}_{1}$.

We claim next that in the smaller cylinder $\overline{\mathcal{C}}_2$,
\begin{equation}\label{wislip}|\nabla w|(x) \leq C_1, \quad x \in \overline{\mathcal{C}}_2.\end{equation}
Define $d(x) = {\rm dist}(x,F(u))$.  At points
$x \in \overline{\mathcal{C}}_2 \cap \mathcal{C}_1^+(u)$ such that
$d(x) \ge 1/10$, this follows from standard
elliptic regularity and the fact that $w$ is bounded.
On the other hand, at points that are close to $F(u)$, we
have that $B_{d(x)}(x) \subset
\mathcal{C}^+_{1}(u)$ and from Lemma
\ref{Lipconst}, $$w(x) < u(x) \leq K
 d(x).$$ A standard argument using rescaling implies
the bound \eqref{wislip}.

%that is a point where there exists an exterior tangent ball
%$B_\epsilon$. Then, since $x_0 \in F(u)$, according to Definition
%\ref{defsol} we have
%\begin{equation}\label{exp}u(x)= (x-x_0,\nu)^+ + o(|x-x_0|), \quad \text{as $x \rightarrow
%x_0$,} \end{equation} with $\nu$ outer unit normal to $B_\epsilon$
%at $x_0$.

Now, set $h=u-w$. Then $h$ is a positive (see \eqref{w<u})
harmonic function on $\mathcal{C}^+_{1}(u)$ vanishing continuously
on $F(u)$.
%Moreover, the Lipschitz continuity of $u$ gives that
%\begin{equation}\label{h}h=u-w < u- v \leq C \eps. \end{equation}
Let $H$ be the harmonic function in the cylinder  $B_{9/10} \times
(L-1, L-1/2)$, with boundary data $c_1/2$ on the top of the
cylinder and vanishing on the remaining part of the boundary.
Then, in view of \eqref{v+<w<u-}, $h \geq \eps H$. Thus, $h(x_1)
\geq c_1\epsilon/4$, at $x_1= (L-1/2-\delta_n)e_n$ for a small
dimensional constant $\delta_n >0$. Moreover, by the Lipschizt
continuity of $u$ we get that $h(x_1) < (u - v)(x_1) \leq K \eps$.
Using non-degeneracy and Lipschizt continuity of $u$ we also have
that $b_n \leq u (x_1) \leq 2LK$. Thus, Corollary \ref{corharnack}
gives
\[
c_2 \eps u \leq h \leq C_2\epsilon u \ \ \textrm{on
$ \overline{\mathcal{C}^+_2(u)}$}.
\]
The upper bound on $h$ implies,
\begin{equation}\label{upper}w(x) \geq (1-C_2\epsilon)u(x)  \ \
\textrm{on $ \overline{\mathcal{C}^+_2(u)}$},\end{equation} while
the lower bound gives
\begin{equation}\label{lower}
w(x) \leq (1-c_2\epsilon)u(x) \ \ \textrm{on $
\overline{\mathcal{C}^+_2(u)}$}.
\end{equation}
In particular, if $F(u)$ is smooth around a point $x_0 \in \mathcal{C}_2$ then
$|\nabla u|(x_0)= 1,$
which combined with \eqref{lower} gives
\begin{equation}\label{nabla}|\nabla w|(x_0) \leq 1 - c_2\epsilon.\end{equation}
According to Lemma
\ref{full harmonic measure} we then have
\begin{equation}\label{nabla2}|\nabla w|\leq 1 - c_2\epsilon \quad \text{$\omega$-almost everywhere on
$F(u) \cap \mathcal{C}_2$}.\end{equation}

Next we use \eqref{nabla2} to show that, by restricting on the smaller cylinder $\mathcal{C}_3$,  we have
\begin{equation}\label{strictinterior}
|\nabla w| \leq 1-c_2\epsilon
+ C_3u\ \ \ \textrm{on $\mathcal{C}^+_{3}(u)$}.
\end{equation}
Let $\tilde{h}$ be the largest harmonic function
$\tilde h \le C_1$ in $\mathcal{C}^+_{2}(u)$
such that
\[
\tilde{h}=1-c_2 \epsilon \quad \text{on} \ F(u)\cap \mathcal{C}_2
\]
%\begin{align*}
%&\tilde{h}=1-c_2 \epsilon \quad \text{on} \ F(u)\cap \mathcal{C}_2 \\
%&\tilde{h}=C_1 \quad \text{on} \ \p{\mathcal{C}^+_{2}(u)}
%\setminus F(u),
%\end{align*}
with $C_1$ the constant in \eqref{wislip}.   Since
$|\nabla w|$ is subharmonic, it satisfies
\eqref{wislip}-\eqref{nabla2}
%\begin{equation}\label{limitb}\lim_{x \rightarrow x_0 } |\nabla w|
%\leq 1-c_2\epsilon, \quad \text{for $\omega$-a.e.
%$x_0 \in F(u) \cap \mathcal{C}_2$,}\end{equation}
we get \begin{equation}\label{nablaw<h}
|\nabla w| \leq
\tilde{h}.
\end{equation}

On the other hand, $\tilde{h} - (1-c_2\epsilon)$ is a positive
harmonic function on $\mathcal{C}^+_{2}(u),$ and it is zero on
$F(u)$. Since by non-degeneracy $u$ is bounded below by a
dimensional constant on the top of $\mathcal{C}_3$, Corollary \ref{corharnack}
gives
\[
\tilde{h}
-(1-c_2\epsilon) \leq C_3 u \quad \text{on
$\mathcal{C}^+_{3}(u).$}
\]
Combining this inequality with \eqref{nablaw<h} we obtain
\eqref{strictinterior}.

We now use \eqref{strictinterior} to construct a family of strict
supersolutions. Define for $t\ge0$,
\[
w_t(x)=w(x)-tg(x), \ \  \ \textrm{$x \in \mathcal{C}_{1}$}
\]
with
\[
g(x) = e^{Ax_n}\phi\left(|x'|\right)
\]
where $A$ is a positive constant to be chosen later,
and $\phi\ge0$ is a smooth bump function such that
\[
\phi(r)=\left\{%
\begin{array}{ll}
    1, & \hbox{if $r < 1/2$} \\
    0, & \hbox{if $r \geq  7/10$.} \\
\end{array}%
\right.
\]
Moreover, we will choose $\phi$ such that $\phi(r)>0$ for $r < 7/10$
\[
\phi''(r) + \frac{n-2}{r}\phi'(r) \geq 0, \ \ \text{if \quad $6/10
\leq r \leq 7/10$}.
\]
Indeed, let $\displaystyle \psi(s) = e^{-2n/s}$ for $s>0$ and
$\psi(s) = 0$ for $s\le 0$.  Then for $0 \le s \le 1$,
\[
\psi''(s) - 2n\psi'(s) =  [(2n + 4n^2)/s^2 - (2n)^2/s]e^{-2n/s} \ge 0.
\]
Because $(n-2)/r \le 2n$ for $r \ge 1/2$, the
function $\phi_1(r) = \psi(7/10 - r)$ satisfies
the differential inequality for $\phi$ above in the range $r \ge 1/2$.
Using a partition of unity, $\phi_1$ can be modified
without changing its values for $r \ge 6/10$, to obtain
a function $\phi$ that is equal to $1$ for $r\le 1/2$.
Finally, using the inequalities for $\phi$,
\[
\Delta g = A^2 e^{Ax_n}\phi\left(|x'|\right) +
e^{Ax_n}\Delta \phi\left(|x'|\right)\geq 0
\]
as long as $A$ is a sufficiently large dimensional constant.

Thus, $w_t$ is superharmonic on $\mathcal{C}_{1}^+(w_t).$
Moreover, condition \eqref{strictinterior} together with
\eqref{upper} imply that,
$$|\nabla w_t| \leq |\nabla w| + t|\nabla g| \leq 1-c_2\epsilon +C_4w + t|\nabla
g|, \quad \text{on} \ \mathcal{C}_{3}^+(u).$$ In particular, on
$F(w_t)\cap \mathcal{C}_{3}$, $t>0$, since $w=tg$ we obtain
\[
|\nabla w_t|\leq 1-c_2\epsilon + C_4tg + t|\nabla g|.
\]
Therefore, for $0<t \leq c_3\epsilon$, with $c_3$ small depending
on $c_2$, $C_4$, and $A$, we deduce that
\begin{equation}\label{smallerthan1}
|\nabla w_t|\leq 1-\frac{c_2}{2}\epsilon \quad
\text{on} \  F(w_t)\cap \mathcal{C}_{3}.
\end{equation}

\medskip

\noindent \textbf{Step 3: Comparison.}

\medskip

Observe that because $g$ vanishes on the ``sides''
we have that
\begin{equation}\label{bc1}
w_t = w
> v  \quad \text{on}  \ (\partial B_{9/10} \times
[-L,L]) \cap \overline{\mathcal{C}_{1}^+(w_t)},
\end{equation}
and according to \eqref{v+<w<u-} we have that
\begin{equation}\label{bc2}
w_t > v \quad \text{on} \ B_{9/10} \times \{L-1/2\} \quad \text{for} \ t
\leq \frac{c_2}{4}e^{-A(L-1/2)}\eps = c_4\epsilon.
\end{equation}
Let
$E= \{t \in [0,c_4\epsilon] : v \leq w_t \ \text{in} \ \overline{\mathcal{C}_{1}}\}$.
We claim that $E=[0,c_4\epsilon].$ Indeed, $0 \in E$ and clearly $E$
is closed. We need to show that $E$ is open. Let $t_0 \in E$, then
since $w_{t}$ is superharmonic in its positive phase  and
satisfies \eqref{bc1}-\eqref{bc2} we only need to show that
$w_{t_0}>v=0$ on $F(v)\cap \mathcal{C}_1$.

In the case $t_0=0$, $w_0 = w > 0$ on $F(v)$ follows from the assumption
that $F(u)$ is a graph in the vertical direction.  In fact for all $t$,
$w_t= w >0$ on $F(v) \cap (\mathcal C_1 \backslash \mathcal C_3)$ because
$g$ is zero there.  It remains to rule out the case, in which $t_0>0$,
and $F(v)$ touches $F(w_{t_0})$ in $\mathcal C_3$, that is, where
$g(x_0)\neq0$.

Suppose by contradiction $x_0 \in F(v) \cap F(w_{t_0})\cap \mathcal C_3$,
$t_0>0$.  If $\nabla w_{t_0}(x_0) \neq 0$, then by the implicit function
theorem $F(w_{t_0})$ is smooth in a neighborhood of $x_0$ and
hence there exists an exterior tangent ball $\mathcal{B}$ at $x_0$ for
$F(v)$. Therefore, for $\nu$ the outward unit normal to $\mathcal{B}$ at
$x_0$ we have that
$$w_{t_0}(x) \geq v(x) = (x-x_0,\nu)^+ + o(|x-x_0|)$$
as $x \rightarrow x_0$, contradicting \eqref{smallerthan1}.

On the other hand, if $\nabla w_{t_0}(x_0)=0$, then in a small neighborhood
$\mathcal{B}_r(x_0)$ we have that $$w_{t_0}(x) \leq Cr^2.$$ However, according to the corkscrew condition,
there exists a ball
$\mathcal{B}_{\delta r}(y) \subset \mathcal{B}_r(x_0)\cap \mathcal{C}_1^+(v)$, for some small $\delta>0$.
By the non-degeneracy of $v$ we then obtain
$$\sup_{\mathcal{B}_r(x_0)} v \geq c r,$$  and
again we reach a contradiction.

Thus $c_4\eps \in E$, and
\[
v \leq w_{c_4 \eps} \ \ \text{on}  \ \ \overline{\mathcal{C}}_1.
\]
Hence, according to the definition of $g$,
\[
\{w \leq c_4 e^{-AL}\eps \}\cap \{|x'|< 1/2\} \subset \{v=0\} \cap \{|x'|< 1/2\}.
\]
Moreover, by \eqref{wislip}
$$w \leq C_1 d(x).$$
Thus,
$$\{d(x)\leq c_6 \eps \}\cap \{|x'|<1/2\} \subset \{v=0\} \cap \{|x'|<1/2\}.$$This
implies the Lipschitz continuity of $F(u) \cap \{|x'|< 1/2\}$ with
bound depending only on $L,n$ and the NTA constants.
\qed

\section{A priori gradient bound for minimal surfaces}

In this section we present our proof of Theorem \ref{boundMS}.
Recall that  $B_r$ denotes an open $(n-1)$-dimensional ball of
radius $r$, while $\mathcal{B}_r$, denotes an open $n$-dimensional
ball of radius $r$.

Our proof is parallel to one in the free boundary
setting above.  One main ingredient which will
allow us to apply our deformation argument will be the (weak)
Harnack inequality for solutions to elliptic equations on
minimal surfaces due to Bombieri and Giusti \cite{BG}. We recall
its statement in the form in which we will use it later in the proof.

%Consider the equation
%\begin{equation}\label{LB}\Delta_S v = 0 \end{equation}
%on an oriented boundary $S$ of least area in
%$\mathcal{B}_R$,

Let $\Delta_S$ denotes the Laplace-Beltrami operator on the
surface $S$.

%Define,
%\[
%S_r(x_0) = S \cap \mathcal{B}_r(x_0), \quad r>0, x_0 \in
%\R^{n},
%\]
%with $x_0$ being the center of the ball $\mathcal{B}_r$. We will usually
%drop the dependence on $x_0$, so that $S_r = S_r(x_0)$ is the surface ball with
%the same center as $\mathcal B_r = \mathcal B_r(x_0)$.

\begin{thm}\label{weakHarnack}
Let $p< \dfrac{n-1}{n-3}$. There is a constant $C(p)<\infty$ and $\beta>0$
depending on dimension such that if $S$ is an area minimizing hypersurface
in $\mathcal B_R = \mathcal B_R(x_0)$ and $x_0\in S$ and
$v$ is a positive supersolution to the Laplace-Beltrami operator,
$\Delta_S v \le 0$, in $\mathcal B_R \cap S$, then
\begin{equation}
\left(\fint_{\mathcal B_r \cap S}v^p dH_{n-1}\right)^{1/p} \leq C(p)
\inf_{\mathcal B_r \cap S} v
\end{equation}
for all $r \leq \beta R$.
\end{thm}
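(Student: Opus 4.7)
The plan is to establish Theorem \ref{weakHarnack} by the classical Moser iteration scheme of Bombieri--Giusti, adapted to the intrinsic geometry of $S$. The geometric inputs that replace their Euclidean counterparts are (i) the Michael--Simon Sobolev inequality on minimal hypersurfaces,
$$\left(\int_S |\eta|^{\frac{n-1}{n-2}}\, dH_{n-1}\right)^{\frac{n-2}{n-1}} \le C(n) \int_S |\nabla_S \eta|\, dH_{n-1}$$
for every compactly supported Lipschitz $\eta$, and (ii) the density bound $H_{n-1}(\mathcal B_r\cap S) \approx r^{n-1}$, which holds for area-minimizing currents through $x_0\in S$ by the monotonicity formula together with a comparison argument. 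Together they allow every step of the Euclidean De Giorgi--Nash--Moser--John--Nirenberg machinery to be carried out on $\mathcal B_r\cap S$ with constants depending only on $n$.

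With these in hand, I would first derive a Caccioppoli estimate by testing $-\Delta_S v\ge 0$ against $\eta^2 v^{2q-1}$: for any $q\neq 0$,
$$\int_S \eta^2 v^{2q-2}|\nabla_S v|^2\, dH_{n-1} \le C(q) \int_S v^{2q}|\nabla_S \eta|^2\, dH_{n-1}.$$
Coupling this with the $L^2$ form of Michael--Simon applied to $\eta v^q$ gives a reverse H\"older self-improvement from $L^{2q}$ to $L^{2q\kappa}$ with $\kappa=(n-1)/(n-3)>1$. Iterating over $q=q_0\kappa^j$ with shrinking radii produces two separate estimates: starting from negative exponents and letting them diverge,
$$\inf_{\mathcal B_{r/2}\cap S} v \ge c(s_0)\left(\fint_{\mathcal B_r\cap S} v^{-s_0}\right)^{-1/s_0}\qquad\text{for all } s_0>0,$$
and, starting from a small positive exponent,
$$\left(\fint_{\mathcal B_{r/2}\cap S}v^p\right)^{1/p} \le C(p,s_0)\left(\fint_{\mathcal B_r\cap S} v^{s_0}\right)^{1/s_0}\qquad\text{for all } 0<s_0<p<\kappa.$$

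To chain these into the claimed inequality it remains to bridge positive and negative moments by producing some $s_0>0$ with
$$\left(\fint_{\mathcal B_r\cap S} v^{s_0}\right)\left(\fint_{\mathcal B_r\cap S} v^{-s_0}\right) \le C.$$
Testing $-\Delta_S v\ge 0$ against $\eta^2/v$ yields a Caccioppoli bound $\int_S \eta^2|\nabla_S\log v|^2\le C\int_S|\nabla_S\eta|^2$, placing $\log v\in W^{1,2}_{\mathrm{loc}}(S)$ with dimensional norm. A Poincar\'e inequality on $\mathcal B_r\cap S$, which I would derive from Michael--Simon via a Whitney-type covering argument together with the density bound, then shows that $\log v$ lies in BMO on $\mathcal B_r\cap S$ with dimensional seminorm, and the John--Nirenberg lemma gives exponential integrability of $\pm s_0\log v$ for $s_0$ sufficiently small. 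This is precisely the product bound above; concatenating it with the two Moser iterations yields the theorem for every $p<(n-1)/(n-3)$.

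The principal obstacle is not the iteration itself, which mirrors the Euclidean proof verbatim once the analytic tools are in place. The real work lies in the geometric setup: verifying that Michael--Simon Sobolev, the density bound, and hence the Poincar\'e inequality on $\mathcal B_r\cap S$ all hold with constants depending only on $n$. Michael--Simon is intrinsic to minimal surfaces, while the density bound and the consequent Poincar\'e inequality use area-minimality in an essential way.
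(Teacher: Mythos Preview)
The paper does not prove Theorem \ref{weakHarnack}; it is quoted as a known result of Bombieri and Giusti \cite{BG} and used as a black box. Your outline is a faithful sketch of the Bombieri--Giusti argument (Caccioppoli from the supersolution property, Michael--Simon Sobolev plus area density to run Moser iteration on $S$, and a BMO/John--Nirenberg bridge for $\log v$), so there is nothing to compare: your proposal \emph{is} essentially the proof the paper is citing, not one the paper supplies.
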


\begin{cor}\label{global Harnack}Let $S$ be an oriented surface of least
area in $B_1 \times \mathbb{R}\subset \mathbb{R}^{n-1}\times \mathbb{R}$.
Assume $\mathcal{S}_{1/2}:=S
\cap
(B_{1/2} \times \mathbb{R})$ is connected, and $S \subset
B_1
\times
[-M,M]$. Let $v$ be a positive supersolution to to the Laplace-Beltrami operator,
$\Delta_S v \le 0$, in $(B_1 \times \mathbb{R}) \cap S$,  such
that \begin{equation}\label{average}\int_{\mathcal{S}_{1/2}}v dH_{n-1}
\geq 1.
\end{equation} Then \begin{equation} v \geq c \quad \textrm{on} \quad
\mathcal{S}_{1/2}, \end{equation} with $c>0$ depending only on $n$ and $M.$
\end{cor}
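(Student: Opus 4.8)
The plan is to obtain this global lower bound by chaining the local weak Harnack inequality of Theorem~\ref{weakHarnack} along the connected surface $\mathcal{S}_{1/2}$, the number of links in the chain being controlled by an area comparison.

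First I would record two uniform geometric facts about $S$. The monotonicity formula gives the density lower bound $H_{n-1}(\mathcal{B}_r(x)\cap S)\ge c(n)\,r^{n-1}$ for $x\in S$ and $r$ below the distance from $x$ to $\partial B_1\times\mathbb{R}$. Conversely, replacing $S\cap\mathcal{B}_r(x)$ by a portion of $\partial\mathcal{B}_r(x)$ spanning $S\cap\partial\mathcal{B}_r(x)$ produces an admissible competitor, so the least‑area property yields the matching upper bound $H_{n-1}(\mathcal{B}_r(x)\cap S)\le C(n)\,r^{n-1}$ on the same range of $r$. Summing the latter over a fixed finite covering of $B_{3/4}\times[-M,M]$ by balls of a definite radius gives $H_{n-1}\bigl(S\cap(B_{3/4}\times\mathbb{R})\bigr)\le A_0(n,M)$.

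Next I would build the chain. Fix a small constant $\rho=\rho(n)$ and choose a maximal $\tfrac{\rho}{8}$‑separated set $\{z_1,\dots,z_N\}\subset\mathcal{S}_{1/2}$; since $z_j\in B_{1/2}$, the balls $\mathcal{B}_{\rho/8}(z_j)$ are pairwise disjoint, lie in $B_{3/4}\times\mathbb{R}$, and each carries $S$‑area at least $c(n)\rho^{n-1}$, so $N\le N_1(n,M)$. By maximality the balls $\mathcal{B}_{\rho/4}(z_j)$ cover $\mathcal{S}_{1/2}$, and because $\mathcal{S}_{1/2}$ is connected a separation of the graph on $\{z_j\}$ with an edge whenever $|z_j-z_k|<\tfrac{\rho}{2}$ would separate $\mathcal{S}_{1/2}$ into two disjoint open pieces; hence that graph is connected. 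The hypothesis $\int_{\mathcal{S}_{1/2}}v\,dH_{n-1}\ge1$, together with this covering and the area upper bound, forces some index $j_0$ with $\fint_{\mathcal{B}_{\rho/4}(z_{j_0})\cap S}v\,dH_{n-1}\ge 1/(N_1A_0)=:c_0$; applying Theorem~\ref{weakHarnack} at $z_{j_0}$ with $p=1$ and radius $\tfrac{\rho}{4}$ (legitimate since $\mathcal{B}_{1/4}(z_{j_0})\subset B_1\times\mathbb{R}$ and $S$ is least area there) upgrades this to $\inf_{\mathcal{B}_{\rho/4}(z_{j_0})\cap S}v\ge c_0/C$.

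Finally I would propagate along the graph: if $v\ge c$ on $\mathcal{B}_{\rho/4}(z_j)\cap S$ and $|z_j-z_k|<\tfrac{\rho}{2}$, then $\mathcal{B}_{\rho/4}(z_j)\subset\mathcal{B}_{3\rho/4}(z_k)$, so the density lower bound and the area upper bound give $\fint_{\mathcal{B}_{3\rho/4}(z_k)\cap S}v\,dH_{n-1}\ge c\,\theta$ for a fixed $\theta=\theta(n,M)\in(0,1)$, whence Theorem~\ref{weakHarnack} at $z_k$ yields $\inf_{\mathcal{B}_{\rho/4}(z_k)\cap S}v\ge c\theta/C$. Iterating at most $N_1$ times starting from $z_{j_0}$ reaches every $z_j$ with a total loss of at most $(\theta/C)^{N_1}$, and since the balls $\mathcal{B}_{\rho/4}(z_j)$ cover $\mathcal{S}_{1/2}$ we conclude $v\ge c(n,M)>0$ on $\mathcal{S}_{1/2}$. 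I expect the main obstacle to be the uniform bound $N_1$ on the number of links in the chain — that is, extracting from ``least area plus connected'' a length bound depending only on $n$ and $M$ — which is precisely where the competitor and monotonicity estimates are used; once the chain has bounded length, the conclusion is a routine iteration of the weak Harnack inequality.
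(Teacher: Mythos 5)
Your proposal is essentially the same argument as the paper's. The paper covers $\mathcal{S}_{1/2}$ by a bounded number $N$ of balls of radius $\beta/20$ centered at points of $\mathcal{S}_{1/2}$ (one per cube in a lattice decomposition of $\R^n$), declares two balls equivalent when connected by a chain of overlapping balls, uses connectedness of $\mathcal{S}_{1/2}$ to conclude there is a single equivalence class, finds one ball carrying integral $\ge 1/N$, and chains the weak Harnack inequality together with the two-sided density bound $H_{n-1}(S\cap\mathcal{B}_\rho(x))\approx\rho^{n-1}$ along the chain. Your maximal $\rho/8$-separated net plays exactly the role of the paper's cube centers, your adjacency graph and its connectedness are the paper's equivalence classes, and your ``propagate along the graph'' iteration is the paper's chain of inequalities \eqref{comparable}.

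The only place you make things harder than needed is in bounding the number $N_1$ of chain links: you invoke the upper area bound (via a competitor argument) together with the lower density bound, but since $\mathcal{S}_{1/2}\subset B_{1/2}\times[-M,M]$ is a set of bounded diameter, a pure packing count of a $\rho/8$-separated net already gives $N_1\le C(n,M)$ without any reference to least area. The paper takes that shorter route by simply counting lattice cubes meeting the cylinder. Where the two-sided density estimate \eqref{fatness} is genuinely needed — and where you use it correctly — is in the propagation step, to pass from a pointwise lower bound on a smaller ball to a lower bound on the average over a larger ball before applying Theorem~\ref{weakHarnack} again. So your stated worry (that bounding the chain length is the main obstacle) is misplaced; the real content is in \eqref{fatness} and in Theorem~\ref{weakHarnack}, both of which you invoke appropriately.
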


\begin{proof}  Let $\beta$ (small) be
the constant in Theorem \ref{weakHarnack}. Decompose
$\R^{n}$ into cubes of side-length $\beta/(20\sqrt{n}).$ For
each cube $Q_i$ that intersects $\mathcal{S}_{1/2}$ take a ball
$\widetilde{B}_i \supset Q_i$ with center $x_i$ on
$\mathcal{S}_{1/2} \cap Q_i$ and radius $\beta/20$. Clearly, the
number $N$ of balls $\widetilde{\mathcal{B}}_i$ that cover $\mathcal{S}_{1/2}$
depends only on $n$ and $M.$

We say that  $\widetilde{\mathcal{B}}_i \sim
\widetilde{\mathcal{B}}_j$ if there exists a chain of balls
$\widetilde{\mathcal{B}}_k$ connecting $\widetilde{\mathcal{B}}_i$
and $\widetilde{\mathcal{B}}_j$ such that consecutive balls
intersect. This defines an equivalence relation. To each
equivalence class we can associate the open set which is the union
of all the elements in the class. Notice that open sets
corresponding to distinct equivalence classes are disjoint. Since
$\mathcal{S}_{1/2}$ is connected, we conclude that all the balls
belong to the same equivalence class.

If $\widetilde{\mathcal{B}}_1$ and $\widetilde{\mathcal{B}}_2$
intersect then they are both contained in
$B_{\beta/2}(x_1)$. Hence applying Theorem \ref{weakHarnack} we obtain
\begin{equation}\label{comparable}
 \int_{\widetilde{\mathcal{B}}_1 \cap S} v dH_{n-1} \leq C_0 \fint_{B_{\beta/2} \cap
S} v dH_{n-1} \leq C_1 \inf_{S_{\beta/2}(x_1)} v \leq C_2
\int_{\widetilde{\mathcal{B}}_2 \cap S} v dH_{n-1}.
\end{equation}
In the last inequality we used the well-known fact that
\begin{equation}\label{fatness}
H_{n-1}(S\cap B_\rho(x))
\approx \rho^{n-1} \quad \textrm{for all} \quad  x \in
S.
\end{equation}
It also follows from \eqref{average} that at least one of the balls,
say $\widetilde{\mathcal{B}}_1$, satisfies
\begin{equation}\label{average2}
\int_{\mathcal{S}_{1/2} \cap
\widetilde{\mathcal{B}}_1}v dH_{n-1} \geq 1/N.
\end{equation}
Combining
\eqref{comparable}-\eqref{average2} with the fact that any two balls can
be connected by a chain of length at most $N$, we obtain the desired
conclusion.
\end{proof}

\textit{Proof of Theorem $\ref{boundMS}$}. In what follows, the
constants $c, c_i, C, C_i$ depend only on $n$ and $M$. Denote by $S$ the graph of $\phi$ over $B_1.$ We
present the proof in three steps.

\medskip

\noindent \textbf{Step 1: Separation on a set of substantial measure.}

\medskip

Let $\epsilon >0$ and set
\[
S_\epsilon:=\{(x,\phi(x)+\epsilon) : x
\in B_1 \}.
\]
\noindent We will prove that there exists a smoothly bounded,
closed set $\tilde{E} \subset
B_{1/2}$ of positive measure independent of $\eps$ as $\eps \to 0$
such that
\begin{equation}\label{E}
\text{dist}((x,\phi(x)), S_\epsilon) \geq c_0\epsilon
\quad \text{for all} \ x\in \tilde{E} + B_\delta
\end{equation}
where $\delta>0$ depends on the (a priori) bound on the modulus of continuity 
of $\nabla \phi$. 

Let $\eta \in C_0^\infty(B_1)$ be a smooth cut-off function
such that $\eta \equiv 1$ on $B_{1/2}$. Then, since $\phi$
satisfies \eqref{MS} we have that

$$\int_{B_1}\frac{\nabla \phi \cdot \nabla
(\eta^2 \phi) }{\sqrt{1+|\nabla \phi|^2}} dx =0.$$

\noindent Hence,
\begin{align*}\int_{B_1}\eta^2 \frac{|\nabla
\phi|^2}{\sqrt{1+|\nabla
\phi|^2}}dx =& - 2\int_{B_1}\phi \eta \frac{\nabla \phi \cdot \nabla
\eta}{\sqrt{1+|\nabla \phi|^2}} \leq \\ & \
2\left(\int_{B_1}\frac{\eta^2 |\nabla \phi|^2}{\sqrt{1+|\nabla
\phi|^2}}\right)^{1/2}\left(\int_{B_1}\frac{\phi^2 |\nabla
\eta|^2}{\sqrt{1+|\nabla \phi|^2}}\right)^{1/2}. \end{align*}

\noindent Thus,
\begin{equation*}\int_{B_1}\eta^2 \frac{|\nabla
\phi|^2}{\sqrt{1+|\nabla
\phi|^2}} dx
 \leq 4 \int_{B_1} \phi^2 \frac{|\nabla \eta|^2}{\sqrt{1+|\nabla
\phi|^2}}
 dx\leq C M^2
\end{equation*}

\smallskip

\noindent Since $\eta \equiv 1$ on $B_{1/2}$ we then get
\begin{equation}
\int_{B_{1/2}} |\nabla \phi| dx \leq C_0.
\end{equation}
with $C_0$ depending on $M$ and $n$ only.
Hence, by Chebyshev's inequality, (for $C_1 = 2C_0/|B_{1/2}|$)
$$|\{x \in B_{1/2} : |\nabla \phi| < C_1\}|
 \geq |B_{1/2}|/2.$$
Since $\phi$ is smooth, there is a closed, smoothly bounded set 
\[
\tilde E \supset \{x \in B_{1/2} : |\nabla \phi| < C_1\}
\]
and $\delta >0$ sufficiently small depending on the modulus of continuity
of $\nabla \phi$ such that 
\[
\tilde E + B_\delta \subset\{x \in B_{1/2} : |\nabla \phi|^2 \leq C_1^2 + 1\}
\]
This implies the desired claim \eqref{E}, for
small enough $\epsilon$ and $\delta$, 
depending on the smoothness of $\phi$.

In what follows we denote by $E = \{(x,\phi(x)), x\in \tilde{E}\}.$
Clearly, $H_{n-1}(E) \geq |B_{1/2}|/2.$

\medskip

\noindent \textbf{Step 2: Construction of a family of
subsolutions}.

\medskip

For the time being let $S$ be any smooth surface.  Denote by $H(P, S)$ the mean curvature of  $S$ at a point $P \in S,$ (i.e. the trace of the second fundamental form of $S$ at $P$.) Assume that $S$ is a smooth graph over $B_1$, i.e.
$S= \{(x, \phi(x)) : x \in B_1\}$, and let
$w$ be a $C^2$ non-negative function on $S$. Consider
the surface $S_{t,\nu}:=S+tw\nu$ obtained deforming $S$
along the upward unit normal to $S$, 
that is $$S_{t,\nu} = \{(x,\phi(x))+ tw(x,\phi(x))\nu_x, x \in B_1\},$$ with $$\nu_x = \dfrac{(-\nabla \phi(x), 1)}{\sqrt{1+|\nabla \phi(x)
|^2}}.$$ Then, for $t$ small enough, $S_{t}$ is also a
graph
 and one can compute
(see for example \cite{Ko})
\begin{align}\label{korevar}
&H(P_t, S_{t,\nu}) = H(P, S) + t(\Delta_{S} w(P) +
|A|_{S}^2 w(P)) + O(t^2), \\ &P:= (x,\phi(x)), P_t:=(x,\phi(x))+ tw(x,\phi(x))\nu_x, 
\end{align}

\smallskip

\noindent where $|A|_S$ is the norm of the second
fundamental form of $S$. (The $O(t^2)$ term depends
at most on the third
derivatives of $\phi$ and on the second derivatives of $w$.)

Applying formula \eqref{korevar} to our minimal surface $S$ we
find that
\begin{equation}\label{korevar2}
H(P_t, S_{t,\nu}) = t(\Delta_{S} w(P) + |A|_{S}^2 w(P)) + O(t^2).
\end{equation}

In order to run a continuity argument (as in the proof of Theorem
\ref{main}), we wish to use formula \eqref{korevar2} to produce a
family of surfaces $S_{t,\nu} $ which are strict subsolutions to the
minimal surface equation i.e. $H(\cdot, S_{t,\nu})
>0$ at least outside $E_{t,\nu}:= E+ tw\nu$, with $E$ the set from the previous step. Towards
this aim we prove the following claim.

\medskip

\noindent \textit{Claim.} There exists a function $w$ defined on
$S$  such that
\begin{align*}
& \Delta_S w + |A|_S^2 w > 0 \quad \text{on} \ S \setminus E,\\
& w(x,\phi(x)) =1 \quad \text{on} \ \tilde{E},\\
& w(x,\phi(x))=0 \quad \text{on} \ \partial B_1.
\end{align*}

\noindent Moreover
\begin{equation}\label{lowerbound}
w(x,\phi(x)) \geq c_0 >0 \quad \text{on} \ B_{1/2},
\end{equation}
with $c_0$ depending only on $n, M$ and $w \in C^2(\overline{S\setminus E})$ with $C^2$ bounds depending on $S$ and $E$.

\noindent \textit{Proof of the claim.} Let $w_1$ be the solution
to the following boundary value problem,
\begin{align*}
& \Delta_S w_1 = 0 \quad \text{on} \ S \setminus E,\\
& w_1(x,\phi(x)) =1 \quad \text{on} \ \p \tilde{E},\\
& w_1(x,\phi(x)) =0 \quad \text{on} \ \partial B_1.
\end{align*}
Note that the solution exists and is smooth in its domain of definition
because $\tilde E$ is smoothly bounded.  
Extend $w_1  =1$ on $\tilde E.$ Then $\Delta_S w_1 \leq 0$ on $S$. 
Moreover, according to Step 1,
we have that (using the notation of Corollary \ref{global Harnack})
\[
\int_{\mathcal{S}_{1/2}} w_1 dH_{n-1} \geq \int_{E} w_1 dH_{n-1} 
=  H_{n-1}(E) \geq |B_{1/2}|/2.
\]
Hence we can apply
Corollary \ref{global Harnack} to conclude that
\begin{equation}\label{w1c} w_1 \geq c \quad \text{on} \quad \mathcal{S}_{1/2}=S \cap (B_{1/2} \times
\mathbb{R}).\end{equation}

Now, let $w_0$ be the solution to the following problem:
\begin{align*}
& \Delta_S w_0 = 1 \quad \text{on} \ S \setminus E,\\
& w_0(x,\phi(x)) =0 \quad \text{on} \ \tilde{E},\\
& w_0(x,\phi(x))=0 \quad \text{on} \ \partial B_1.
\end{align*}
and set $w=w_1 + \delta_1 w_0$. Clearly, $|\nabla w_0|$ is bounded 
(by a constant depending on $S$ and $E$). Applying Hopf's lemma 
to $w_1$ on $(\p B_1 \times \mathbb{R}) \cap S$ , we obtain that, 
for $\delta_1$ sufficiently small, $w > 0$ in a neighborhood of 
$(\p B_1 \times \mathbb{R}) \cap S$  and hence (for a possibly smaller $\delta_1$) 
$w>0$ on $S$. Moreover, in view of $\eqref{w1c}$, we can choose
$\delta_1$ so that $w$ satisfies \eqref{lowerbound}. Thus, $w$
has all the required properties.

\smallskip

In view of the claim, according to formula \eqref{korevar2}, if $t$ is
sufficiently small, $0<t \leq \epsilon_0$ then $$H(\cdot, S_{t,\nu}) >0, \quad \text{on $S_{t,\nu} \setminus E_{t,\nu}$}.$$

\medskip

\noindent \textbf{Step 3: Comparison.}

\medskip

We show that for $0 \leq t \leq
c_0\epsilon \leq \epsilon_0$, the surface $S_{t,\nu}$ is below the surface
$S_\epsilon.$ Indeed, this is true at $t=0$. The first touching point
cannot occur at some $x \in \partial B_1$, as our deformation
leaves the $\partial B_1$ fixed. Moreover, for $t$ small enough,
no touching can occur on 
$E_{t,\nu}$ in view of \eqref{E} in Step 1.  
Finally $S_{t,\nu}$ is a 
strict subsolution on $S_{t,\nu} \setminus E_{t,\nu}$, hence no touching can occur
there either. Since $w$ satisfies \eqref{lowerbound}, we can then conclude
that for all sufficiently small $\epsilon,$ 
(recall $S_{c_0\epsilon,\nu}=S+c_0\epsilon w\nu $) 
$$\text{dist}((x,\phi(x)), S_\epsilon) \geq
\text{dist}((x,\phi(x)), S_{c_0\epsilon,\nu}) \geq c_1 \epsilon \quad \text{on}
\quad B_{1/2}$$ as desired.  Note that although the size of $\epsilon_0$ 
depends on the a priori bound on $\nabla \phi$, the constants $c_0>0$ and  
$c_1>0$ do not. 

 \qed

\section{Final Remarks}

The analogy between the two gradient bound proofs presented here goes farther.
Not only does each proof depend crucially on  a scale-invariant Harnack inequality for
the second variation operator of the associated functional, but also
the proofs of these two Harnack estimates follow a roughly parallel course. 

The key ingredient of our proof of the gradient bound for minimal surface graphs
is the Harnack inequality for the Laplace-Beltrami operator on the surface.
This Harnack inequality permits us to convert a gradient bound on average (separation on
a set of substantial measure) to a gradient bound everywhere (separation everywhere).
The way this Harnack inequality is proved by Bombieri and Giusti is as follows.
A monotonicity formula yields (via a limiting cone argument) a  measure-theoretic
form of connectivity.  This, in turn, implies another 
scale-invariant form of connectivity, an isoperimetric, or Poincar\'e-type, inequality.  One then
deduces a Harnack inequality for the Laplace-Beltrami operator on the minimal surface
by a Moser-type argument.  

In the free boundary case, a monotonicity formula due to
Alt, Caffarelli and Friedman yields (by arguments of \cite{ACS} and \cite{D1}) the NTA property,
a scale-invariant form of connectivity.   A theorem of \cite{JK} says that the 
NTA property implies a boundary Harnack inequality.   The boundary Harnack
inequality is used to show that separation of level surfaces of the solution function
$u$ at distances far from the free boundary implies  a similar separation all the way up
to the free boundary.  

The parallel between these two Harnack inequalities leads to the hope
that there is  a Harnack estimate for the second variation operator associated 
to minimizers of functionals of the form
\[
\int  |\nabla v|^2 + F(v)
\]
for wider classes of functions $F$.

\end{document}